\newcommand{\modulo}[1]{\quad (\mbox{mod }{#1})}
\newtheorem{theorem}{Theorem}[section]
\newtheorem{lemma}[theorem]{Lemma}
\newtheorem{corollary}[theorem]{Corollary}
\newtheorem{conjecture}[theorem]{Conjecture}
\newtheorem{proposition}[theorem]{Proposition}
\newtheorem{question}[theorem]{Question}
\newtheorem{problem}[theorem]{Problem}
\theoremstyle{definition}
\newtheorem{definition}[theorem]{Definition}
\newtheorem*{remark}{Remark}
\newtheorem{example}{Example}
\def \N { {\mathbb N} }
\def \Q { {\mathbb Q} }
\def \R { {\mathbb R} }
\def \C { {\mathbb C} }
\def \Z { {\mathbb Z} }
\newcommand{\<}{\langle}
\renewcommand{\>}{\rangle}
\newcommand{\old}[1]{}
\newcommand{\modulonospace}[1]{\;(\mbox{mod }{#1})}
\author[Hillar]{Christopher J. Hillar}
\address{The Mathematical Sciences Research Institute\\
         17 Gauss Way\\
         Berkeley, CA 94720-5070\\
         USA}
\email{chillar@msri.org}
\urladdr{http://www.msri.org/people/members/chillar}
\author[Levine]{Lionel Levine}
\address{Department of Mathematics  \\
         Massachusetts Institute of Technology \\
         Cambridge, MA USA}
\email{levine@math.mit.edu}
\urladdr{http://math.mit.edu/~levine/}
\author[Rhea]{Darren Rhea}
\address{University of California, Berkeley \\
         Berkeley, CA 94720 \\
         USA}
\email{darren.rhea@gmail.com}
\thanks{This research was conducted while the first author was supported by an NSA Young Investigators Grant and an NSF All-Institutes Postdoctoral Fellowship administered by the Mathematical Sciences Research Institute through its core grant DMS-0441170.  The second author was partially supported by an NSF Postdoctoral Fellowship.}
\title[Equations solvable by radicals]{Equations solvable by radicals \\ in a uniquely divisible group}
\date{March 26, 2012}
\keywords{absolutely irreducible, Diophantine problem, radical solution, uniquely divisible group, word equation, word polynomial}
\subjclass[2010]{15A24, 20F10, 20F70, 68R15}%
\begin{document}

\begin{abstract}
We study equations in groups $G$ with unique $m$-th roots for each positive integer $m$.
A \textit{word equation} in two letters is an expression of the form
$w(X,A) = B$, where $w$ is a finite word in the alphabet $\{X,A\}$.
We think of $A,B \in G$ as fixed coefficients, and $X \in G$ as the
unknown.  Certain word equations, such as $XAXAX=B$, have solutions in
terms of radicals: \[ X = A^{-1/2}(A^{1/2}BA^{1/2})^{1/3}A^{-1/2} \]
while others such as $X^2 A X = B$ do not.  We obtain the first known
infinite families of word equations not solvable by radicals, and
conjecture a complete classification.

To a word $w$ we associate a polynomial ${P}_w \in \Z[x,y]$ in two
\emph{commuting} variables, which factors whenever $w$ is a
composition of smaller words.  We prove that if $P_w(x^2,y^2)$ has an absolutely irreducible factor in $\Z[x,y]$, then the equation $w(X,A)=B$ is not solvable in terms of radicals.
\end{abstract}

\maketitle

\section{Introduction}

A group~$G$ is called \textit{uniquely divisible} if for every $B \in G$ and each positive integer
$m$, there exists a unique $X \in G$ such that \[X^m = B.\]  
We denote the unique such~$X$ by $B^{1/m}$, and its inverse by $B^{-1/m}$.  In the literature, such groups are also referred to as $\Q$-groups. Note that if it is not the trivial group, then $G$ must be torsion-free, hence infinite.  Examples of uniquely divisible groups include the group of positive units of a real closed field, unipotent matrix groups, noncommutative power series with unit constant term, and the group of characters of a connected Hopf algebra over a field of characteristic zero~\cite{Aguiar}. 

Inspired by trace conjectures in matrix analysis (see section \ref{sec:background}),
we study here the natural question of which equations in a uniquely divisible group have solutions in terms of radicals.  As a motivating example, consider the Riccati equation \[ XAX = B \] with $A,B \in G$ given and $X \in G$ unknown.  This equation has a unique solution~$X$ in any uniquely divisible group; moreover, its solution may be written explicitly (albeit in two distinct ways) as 
\begin{equation}\label{riccatieq}
X = A^{-1/2}(A^{1/2}BA^{1/2})^{1/2}A^{-1/2} = B^{1/2}(B^{-1/2}A^{-1}B^{-1/2})^{1/2}B^{1/2}.
\end{equation}

More generally, let $w(X,A)$ be a finite word in the two-letter alphabet $\{X,A\}$.  An expression of the form
\begin{equation} \label{wordeq} w(X,A) = B \end{equation}
is called a \textit{word equation}.  We are interested in classifying those word equations that have a solution in every uniquely divisible group.  Clearly, the more general situation in which positive rational exponents on $X$, $A$, and $B$ are allowed reduces to this one.

The main tool in our analysis is a new combinatorial object $P_w \in \Z[x,y]$, called the \textit{word polynomial}.  If $w = A^{a_0} X A^{a_1} X \cdots A^{a_{n-1}} X A^{a_{n}}$, we define
	 \[ P_w(x,y) :=  y^{a_0} + xy^{a_0+a_1} + x^2y^{a_0+a_1+a_2} + \cdots + x^{n-1} y^{a_0 + \cdots + a_{n-1}}. \]
For a prime $p$, let $(\Z/p\Z)^*$ denote the set of nonzero elements of the finite field $\mathbb F_p = \Z/p\Z$.  Recall that a group is called \emph{metabelian} if its commutator subgroup is abelian. The following is our main result.  

\begin{theorem}\label{thereduction}
There exists a uniquely divisible metabelian group $G$ with the following property: For all finite words $w$ in the alphabet $\{X,A\}$, if the equation \[ P_w(x^2,y^2)=0 \] has a solution $(x_p,y_p) \in (\Z/p\Z)^* \times (\Z/p\Z)^*$ for all but finitely many primes $p$, then there exist elements $A,B,B' \in G$ for which the word equation \[ w(X,A) = B \] has no solution $X \in G$, and the  word equation 	\[ w(X,A) = B' \] 
has at least two solutions $X \in G$.
\end{theorem}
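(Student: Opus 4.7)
The plan is to realize $G$ as a group of $2\times 2$ upper triangular matrices over $\overline{\mathbb{Q}}$, with diagonal entries restricted to a carefully chosen maximal torsion-free divisible subgroup $D$ of $\overline{\mathbb{Q}}^{\times}$ and the upper-right entry arbitrary in $\overline{\mathbb{Q}}$. Since $D$ is torsion-free and divisible, and extracting an $m$-th root in $G$ reduces to taking $m$-th roots on the diagonal (which stay in $D$) together with solving a linear equation for the upper-right entry, $G$ is uniquely divisible.

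A direct induction on word length shows that for $A = \begin{pmatrix} \alpha & a \\ 0 & \alpha' \end{pmatrix}$ and $X = \begin{pmatrix} \xi & x \\ 0 & \xi' \end{pmatrix}$, the matrix $w(X,A)$ has diagonal entries $\xi^n \alpha^s$ and $(\xi')^n (\alpha')^s$ (with $n$ the number of $X$'s in $w$ and $s = a_0 + \cdots + a_n$), and its $(1,2)$-entry is a $\overline{\mathbb{Q}}$-linear function of $x$ and $a$ in which the coefficient of $x$ is
\[ \lambda \;=\; (\xi')^{n-1}(\alpha')^s \cdot P_w(\xi/\xi',\, \alpha/\alpha'). \]
Setting $w(X,A) = B = \begin{pmatrix} \beta & b \\ 0 & \beta' \end{pmatrix}$, unique divisibility of $D$ uniquely determines $\xi, \xi'$ from the diagonal equations, and the off-diagonal equation becomes a single linear relation $\lambda\, x = c$ where $c$ depends only on $A$ and $B$. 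This relation has a unique solution when $\lambda \ne 0$, no solution when $\lambda = 0 \ne c$, and a one-parameter family of solutions when $\lambda = 0 = c$. Therefore the theorem reduces to exhibiting $(\xi,\xi',\alpha,\alpha') \in D^4$ with $P_w(\xi/\xi', \alpha/\alpha') = 0$, after which $B$ and $B'$ are obtained by choosing the $(1,2)$-entry to make $c$ nonzero or zero respectively.

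Because $D$ is uniquely divisible, every element of $D$ is a square in $D$, and the condition above is equivalent to finding $u, v \in D$ with $P_w(u^2, v^2) = 0$ (take $\xi/\xi' = u^2$ and $\alpha/\alpha' = v^2$). This is where the hypothesis enters: by Lang--Weil estimates, the existence of $(\mathbb{Z}/p\mathbb{Z})^{\times} \times (\mathbb{Z}/p\mathbb{Z})^{\times}$-zeros of $P_w(x^2, y^2)$ for all but finitely many $p$ forces the variety $V_w = \{P_w(x^2, y^2) = 0\} \cap \mathbb{G}_m^2$ to contain an absolutely irreducible component of positive dimension defined over $\mathbb{Q}$; the uniformity of the hypothesis in $p$ rules out the possibility that this component is contained in a torsion coset of a proper subtorus, since such a coset would fail to have $\mathbb{F}_p$-points at primes where the requisite roots of unity are unavailable. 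Laurent's theorem on torsion points in subvarieties of $\mathbb{G}_m^n$ then produces infinitely many $\overline{\mathbb{Q}}$-points $(u,v)$ on this component with both $u$ and $v$ of infinite multiplicative order.

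The principal obstacle is that a single $D$ must accommodate such a pair $(u,v)$ for every admissible $w$ simultaneously. Since $\overline{\mathbb{Q}}$ is countable, there are only countably many admissible words, and for each one has broad freedom in selecting $(u_w, v_w)$. I would construct $D$ by enumerating the admissible words and iteratively enlarging the torsion-free divisible subgroup built so far: at each step, adjoin the rational closure of a non-torsion point $(u_w, v_w) \in V_w(\overline{\mathbb{Q}})$ whose coordinates are multiplicatively independent modulo roots of unity of all previously adjoined elements. Such a choice is possible by the Zariski-density of generic non-torsion points on a positive-dimensional component, and the resulting $D$ is torsion-free and divisible by construction. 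The matrix identity then produces $A, B, B'$ for each admissible $w$.
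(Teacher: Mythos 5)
Your matrix setup is internally sound: the group of upper-triangular matrices with diagonal entries in a torsion-free divisible subgroup $D\le\Qbar^{\times}$ is indeed uniquely divisible, your formula for the coefficient $\lambda=(\xi')^{n-1}(\alpha')^{s}P_w(\xi/\xi',\alpha/\alpha')$ is correct, and the reduction to ``find a zero of $P_w$ with both coordinates in $D$'' is valid. (This is a genuinely different architecture from the paper, which instead builds $G$ as $\prod G_{p_i}/\bigoplus G_{p_i}$ for finite affine groups $G_{p}$ of order $p(p-1)/2$ chosen via Dirichlet, and transfers the mod-$p$ hypothesis directly through Lemma~\ref{thecrux}; that route needs no algebraic geometry at all.) But the number-theoretic bridge you rely on has a genuine gap, in two places. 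First, the asserted implication ``$(\Z/p\Z)^{*}$-points for almost all $p$ $\Rightarrow$ an absolutely irreducible component over $\Q$'' is false: $(x^{2}-2)(x^{2}-3)(x^{2}-6)$ has nonzero roots mod every odd prime but no absolutely irreducible factor over $\Q$. Lang--Weil gives only the converse direction (this is exactly Proposition~\ref{WeilConjCor} in the paper). Second, and more seriously, your construction \emph{cannot} work if every component of $\{P_w(x^{2},y^{2})=0\}$ inside the torus is a torsion coset $\{x^{a}y^{b}=\zeta\}$ with $\zeta\neq 1$: any point $(u,v)$ on such a coset forces the nontrivial root of unity $\zeta=u^{a}v^{b}$ into $D$, contradicting torsion-freeness, no matter how $D$ is chosen. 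So the crux of your proof is precisely to show that the mod-$p$ hypothesis excludes this all-torsion-coset scenario, and the sentence ``the uniformity of the hypothesis in $p$ rules out\ldots a torsion coset'' is an assertion, not an argument. A correct argument here must use the evenness of $P_w(x^{2},y^{2})$ (sign-change symmetry of the zero set, together with $P_w(1,1)=n\neq 0$, restricts which cosets can occur) combined with a Dirichlet/Chebotarev density count showing that finitely many cyclotomic solvability conditions of density $<1$ cannot cover almost all primes; note that your remark ``every element of $D$ is a square, so the squares are irrelevant'' discards exactly the structure this argument needs.

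Even granting a component that is not a torsion coset, the last step is under-justified. Laurent's theorem does not ``produce'' points; and coordinates each of infinite multiplicative order is not the condition you need --- you need $\langle u,v\rangle$ to contain no nontrivial root of unity (no relation $u^{a}v^{b}=\zeta\neq 1$), and, for the iterative construction of $D$, independence modulo torsion from everything previously adjoined, so that the divisible hull (which exists inside $\Qbar^{\times}$ by injectivity of divisible abelian groups, but only once the generated group is torsion-free) does not acquire torsion. Securing such points requires the finite-rank form of Laurent's theorem or the Bombieri--Masser--Zannier bounded-height theorem, with the case that the component is a non-torsion coset handled separately; ``Zariski-density of generic non-torsion points'' does not suffice, since the bad locus is a countable union of finite sets and cardinality alone decides nothing over $\Qbar$. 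In short: the skeleton could be made into an alternative (much heavier) proof, but as written the decisive steps --- excluding the torsion-coset case and producing multiplicatively compatible points for all admissible words simultaneously --- are missing, and one cited implication is false.
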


We prove this theorem in section~\ref{sec:reductionthm}.  The group $G$ is constructed from an infinite collection of $pq$-groups whose orders are chosen using Dirichlet's theorem on primes in arithmetic progressions.  

In a uniquely divisible abelian group, every word equation has a unique solution.  Theorem~\ref{thereduction} shows that this is already not the case for uniquely divisible metabelian groups: By Example~\ref{X2AXex}, below, there exist $A,B \in G$ such that the equation $X^2 A X = B$ has no solution $X \in G$.

Despite appearances, Theorem \ref{thereduction} yields a computationally efficient sufficient condition for the equation $w(X,A)=B$ to have no solution in~$G$ (see Corollary \ref{mainUnivCor}).  To put Theorem~\ref{thereduction} in context, we next discuss a family of word equations which \emph{do} have solutions in every uniquely divisible group, along with a conjectured complete classification of such words.

We believe that the word polynomial $P_w$ may be useful in other settings.  It behaves particularly well under composition of words: if $u \circ w$ is the word obtained by substituting $w$ for each instance of $X$ in the word $u$, then $P_{u \circ w}$ has a simple factorization in terms of $P_u$ and $P_w$ (Lemma~\ref{compositiontomultiplication}).

In this paper, ``word'' will always mean a finite word over the alphabet $\{X,A\}$ (unless another alphabet is specified).
The word $w$ is called \textit{universal} if (\ref{wordeq}) has a solution $X \in G$ for every uniquely divisible group~$G$ and each two elements~$A, B \in G$; if this solution is always unique, then we say that~$w$ is \emph{uniquely universal}.   A related class of words is those for which (\ref{wordeq}) has a solution ``in terms of radicals."  This notion is defined carefully in section~\ref{sec:radWords}.  Our explorations give evidence for the surprising conjecture that all three of these classes are in fact the same, and can be characterized as follows.

\begin{definition}\label{def:decomposable}
A word $w$ in the alphabet $\{X,A\}$ is \textit{totally decomposable} if it is the image of the letter $X$ under a composition of maps of the form
\begin{itemize}
\item  $\pi_{m,k} : w \mapsto (wA^k)^mw$, for $m \geq 1$, $k \geq 0$
\item $r : w \mapsto wA$
\item $l : w \mapsto Aw$.
\end{itemize}
\end{definition}

For example, the word $w = XAX^2AXAXAX^2AX$ is totally decomposable, as witnessed by the composition $w = \pi_{1,1} \circ \pi_{1,0} \circ \pi_{1,1} (X)$.  According to the following lemma, any totally decomposable word is uniquely universal.

\begin{lemma}\label{solvablewordprop}
Let $G$ be a uniquely divisible group, and let~$w$ be a totally decomposable word.  For any $A,B \in G$, the equation $w(X,A) = B$ has a unique solution $X \in G$, and this solution can be expressed in terms of radicals. 
\end{lemma}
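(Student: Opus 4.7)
The plan is to induct on the number of maps $\pi_{m,k}$, $r$, $l$ used to build $w$ from $X$. The base case $w = X$ is immediate: $w(X,A) = B$ reads $X = B$, which is uniquely and radically solved by $X = B$.

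For the inductive step, I would verify that each of the three building operations preserves the property of being uniquely solvable by radicals. For $r$ and $l$ this is trivial: the equations $w(X,A)A = B$ and $Aw(X,A) = B$ reduce to solving $w(X,A) = BA^{-1}$ and $w(X,A) = A^{-1}B$ respectively, each of which has a unique radical solution by the inductive hypothesis. The substantive case is $\pi_{m,k}$. Here the equation
\[ \pi_{m,k}(w)(X,A) = (w(X,A)A^k)^m w(X,A) = B \]
is solved in two stages. First, setting $Y := w(X,A)$, I solve $(YA^k)^m Y = B$ for $Y$. Then I solve $w(X,A) = Y$ by the inductive hypothesis, yielding a unique $X$ in radicals.

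The heart of the argument — and really the only nontrivial computation — is the first stage: solving $(YA^k)^m Y = B$. The key trick is the substitution $Z := YA^k$, so that $Y = ZA^{-k}$ and
\[ (YA^k)^m Y \;=\; Z^m \cdot ZA^{-k} \;=\; Z^{m+1}A^{-k} \;=\; B. \]
This gives $Z^{m+1} = BA^k$, and hence, by unique divisibility, the unique solution
\[ Y \;=\; (BA^k)^{1/(m+1)}\, A^{-k}, \]
which is manifestly a radical expression in $A$ and $B$. (As a sanity check, the Riccati word $XAX = \pi_{1,1}(X)$ gives $Y = (BA)^{1/2}A^{-1}$, which one verifies satisfies $YAY=B$; uniqueness forces this to agree with the expression in (\ref{riccatieq}).)

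Putting the pieces together, at each step of the inductive construction, either we solve a trivial multiplicative shift or we extract a single $(m+1)$-th root and then invoke the inductive hypothesis — both preserving both the existence and the uniqueness of the radical solution. The only genuine obstacle is noticing the substitution $Z = YA^k$ that collapses $(YA^k)^m Y$ into a pure $(m+1)$-th power; once this is in hand, the rest of the argument is bookkeeping.
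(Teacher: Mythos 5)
Your proof is correct and follows essentially the same route as the paper: induction on the number of elementary maps in the decomposition, with $l$ and $r$ handled trivially and the $\pi_{m,k}$ case reduced by a change of variable to extracting a unique $(m+1)$-th root, so both existence and uniqueness propagate through the induction. The only cosmetic differences are that you peel off the \emph{outermost} elementary map and solve its equation $(YA^k)^mY=B$ first via the one-sided substitution $Z=YA^k$, yielding $Y=(BA^k)^{1/(m+1)}A^{-k}$, whereas the paper peels off the \emph{innermost} map, applies the inductive hypothesis first, and uses the symmetric substitution $z=a^{k/2}xa^{k/2}$; by unique divisibility the two formulas necessarily agree.
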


For the proof, see section~\ref{sec:radWords}.  One easily deduces, for example, that the word equation $XAX^2AXAXAX^2AX = B$ has a unique solution in every uniquely divisible group: \[X = A^{-1/2}\left\{ A^{1/2} \left[ A^{-1/2}(A^{1/2}BA^{1/2} )^{1/2}A^{-1/2} \right]^{1/2} A^{1/2} \right\}^{1/2} A^{-1/2}.\]

We conjecture the converse: totally decomposable words are the only universal words.

\begin{conjecture}\label{universalconj}
Let $w$ be a finite word in the alphabet $\{X,A\}$.  The following are equivalent.
\begin{enumerate}
\item $w$ is totally decomposable.
\item $w$ is uniquely universal.
\item $w$ is universal.
\item $w(X,A) = B$ has a solution in terms of radicals.  (see Definition \ref{def:radicalWord})
\end{enumerate}
\end{conjecture}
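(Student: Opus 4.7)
The plan is to close the loop of implications, concentrating essentially all of the work on (3)$\Rightarrow$(1) while treating the other directions as straightforward. The implication (1)$\Rightarrow$(2) is precisely Lemma~\ref{solvablewordprop}, which simultaneously yields (1)$\Rightarrow$(4) because the explicit formula produced there is manifestly a radical expression. The implication (2)$\Rightarrow$(3) is tautological, and (4)$\Rightarrow$(3) is immediate from Definition~\ref{def:radicalWord}: a radical formula, evaluated in any uniquely divisible group~$G$, produces an actual element $X\in G$ satisfying $w(X,A)=B$. The entire burden of the conjecture is therefore (3)$\Rightarrow$(1).

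\textbf{Reducing (3)$\Rightarrow$(1) to a polynomial statement.}
I would argue by contrapositive: assume $w$ is not totally decomposable and construct $A,B\in G$ for which $w(X,A)=B$ is unsolvable. By Theorem~\ref{thereduction}, it suffices to show that $P_w(x^2,y^2)=0$ has a solution in $(\Z/p\Z)^*\times (\Z/p\Z)^*$ for all but finitely many primes~$p$. The clean way to obtain this is to exhibit an absolutely irreducible factor $F(x,y)\in\Z[x,y]$ of $P_w(x^2,y^2)$ that is not a constant, not $x$, and not $y$: the Lang--Weil estimate then gives $p+O(\sqrt p)$ points of $\{F=0\}$ in $\mathbb F_p^2$, of which only $O(1)$ can lie on a coordinate axis. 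The whole problem thus reduces to the combinatorial/geometric claim
\[
(*) \quad w \text{ not totally decomposable} \;\Longrightarrow\; P_w(x^2,y^2) \text{ has an absolutely irreducible factor not equal to } x \text{ or } y.
\]

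\textbf{Attack on $(*)$ via explicit factorizations.}
I would first compile the polynomial identities induced by each of the three generating operations of Definition~\ref{def:decomposable}. The left/right operations $l,r$ merely shift $a_0$ or $a_n$, and so multiply $P_w$ by a power of~$y$; they are inert as far as non-monomial factors are concerned. The crucial operation is $\pi_{m,k}$, for which one expects an identity of the shape $P_{\pi_{m,k}(w')}(x,y)=P_{w'}(x,y)\cdot Q_{m,k,w'}(x,y)$ with $Q_{m,k,w'}$ of the geometric-series form $1+u+u^2+\cdots+u^{m-1}$ in a monomial $u=x^{\deg_X w'+1}y^{\deg_A w'+k}$. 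Iterating, total decomposability forces $P_w$ to split entirely into such cyclotomic-type factors times a power of $y$. I would then apply a canonical greedy decomposition of $w$, stripping off every applicable $l$, $r$, and $\pi_{m,k}$, and leaving a ``primitive'' residue $w_0\neq X$ whose word polynomial $P_{w_0}$ refuses to be presented as any further such product. The target is to show that $P_{w_0}(x^2,y^2)$ necessarily carries an absolutely irreducible factor not of $Q$-type.

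\textbf{Main obstacle.}
The hard part is absolute irreducibility, not mere $\Q$-irreducibility: one must rule out factorizations of $P_{w_0}$ over $\overline{\Q}$ that are invisible over~$\Q$, and the substitution $x\mapsto x^2$, $y\mapsto y^2$ can introduce additional factorizations via order-$2$ roots of unity. The natural toolkit is the Newton polygon of $P_{w_0}$, whose vertices are partial sums of the $a_i$'s and thus encode $w_0$ combinatorially; a polygon of lattice width~$1$ in some direction already forces an irreducible factor. Complementary tools are Eisenstein-type arguments at suitable primes and sparsity bounds for absolutely irreducible factors of sparse polynomials. The central combinatorial difficulty is to classify precisely which factorizations of $P_w(x^2,y^2)$ over $\overline{\Q}$ correspond to $\pi_{m,k}$-decompositions of $w$ and to show that no exotic factorizations can arise; I expect the whole weight of the conjecture to rest on this classification, which will likely demand a delicate induction on word length together with a careful analysis of the exponent sequence $(a_0,\dots,a_n)$.
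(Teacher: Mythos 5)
The statement you are asked to prove is labeled a \emph{conjecture} in the paper, and the paper does not prove it; it proves only the implications $(1)\Rightarrow(2)\Rightarrow(3)$ and $(3)\Leftrightarrow(4)$, and explicitly identifies $(4)\Rightarrow(1)$ as open. Your treatment of the easy implications is correct and matches the paper's: $(1)\Rightarrow(2)$ and the radical expressibility are Lemma~\ref{solvablewordprop}, $(2)\Rightarrow(3)$ is trivial, and $(4)\Rightarrow(3)$ is the lemma in section~\ref{sec:radWords} that pushes a solution in the free uniquely divisible group $\mathcal{F}$ into an arbitrary $G$ via the universal property. Your reduction of the hard direction to the existence of nonzero solutions of $P_w(x^2,y^2)=0$ modulo almost all primes is exactly Theorem~\ref{thereduction}, and your further reduction to an absolutely irreducible factor via Lang--Weil is exactly Proposition~\ref{WeilConjCor} and Corollary~\ref{mainUnivCor}. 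Up to this point you have reconstructed the paper's framework.

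The genuine gap is your claim $(*)$. It is not proved in your proposal --- the paragraphs headed ``Attack'' and ``Main obstacle'' are a research program (Newton polygons, Eisenstein criteria, a ``canonical greedy decomposition''), not an argument, and you yourself concede that the ``central combinatorial difficulty'' of classifying the factorizations of $P_w(x^2,y^2)$ over $\Qbar$ remains open. This is precisely where the paper stops as well: $(*)$ is the paper's Question~\ref{notsolvableConj2}, which the authors explicitly say they cannot answer, and even the weaker statement you would need (Conjecture~\ref{notsolvableConj}, which only asks for points mod $p$ and does not require an absolutely irreducible factor) is left as a conjecture, verified only for the special families in Corollary~\ref{cor:infexamples} and for words of length at most $10$ by computer. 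Note also that by insisting on an absolutely irreducible factor you are committing to a statement possibly \emph{stronger} than what is needed, since a curve could have many $\mathbb F_p$-points for almost all $p$ without any single factor being absolutely irreducible over $\Z$. A small technical slip besides: in your factorization identity for $\pi_{m,k}$ the cofactor should be $1+u+\cdots+u^{m}$ (there are $m+1$ copies of $w'$ in $(w'A^k)^m w'$) with $u=x^{\deg_X w'}y^{\deg_A w'+k}$, not $1+u+\cdots+u^{m-1}$ with exponent $\deg_X w'+1$. In summary: the proposal correctly assembles the known reductions but does not close the open implication, so it is not a proof of the conjecture.
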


The implications $(1) \Rightarrow (2) \Rightarrow (3)$  and $(3) \Leftrightarrow (4)$ are straightforward (see section~\ref{sec:radWords}).  The remaining implication $(4) \Rightarrow (1)$ is the difficult one.  Theorem~\ref{thereduction} arose out of our attempts to prove this implication.  It reduces the noncommutative question about word equations to a commutative question about solutions to polynomial equations mod~$p$.  More concretely, we use Theorem~\ref{thereduction} to prove that several infinite families of word equations are not solvable in terms of radicals (Corollary~\ref{cor:infexamples}).  To our knowledge, these are the first such infinite families known.

Together with Theorem~\ref{thereduction}, the following would imply Conjecture~\ref{universalconj}.

\begin{conjecture}\label{notsolvableConj}
If $w$ is a word that is not totally decomposable, then the equation $P_w(x^2,y^2)=0$ has a solution $(x_p,y_p) \in (\Z/p\Z)^* \times (\Z/p\Z)^*$ for all but finitely many primes $p$.
\end{conjecture}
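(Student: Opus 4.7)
The plan is to translate the combinatorial condition ``not totally decomposable'' into a polynomial-factorization condition on $P_w$, and then invoke Weil's theorem on rational points of curves over finite fields. The first step is a factorization lemma describing how the three generating operations act on the word polynomial. A direct computation from the defining formula yields $P_{wA}(x,y) = P_w(x,y)$ (since $a_n$ never enters the sum), $P_{Aw}(x,y) = y\, P_w(x,y)$ (since every partial sum increases by one), and, writing $n$ for the number of $X$'s in $w$ and $S = \sum_i a_i$ for the total $A$-exponent in $w$,
\[ P_{\pi_{m,k}(w)}(x,y) \;=\; P_w(x,y)\cdot \Psi_m(x^n y^{S+k}), \qquad \Psi_m(z):=1+z+z^2+\cdots+z^m. \]
Consequently every totally decomposable $w$ has $P_w=y^j\prod_i \Psi_{m_i}(x^{n_i}y^{N_i})$, so that every absolutely irreducible factor of $P_w(x^2,y^2)$ over $\Qbar$ is of the ``trivial'' form $x^{2a}y^{2b}-\zeta$ for some root of unity $\zeta$.

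The crux is the converse: if every absolutely irreducible factor of $P_w(x^2,y^2)$ is of this trivial shape, then $w$ is itself totally decomposable. I would prove this by strong induction on the number of $X$'s in $w$. Given such a factorization, one identifies an outermost trivial factor $\Psi_m(x^n y^{S+k})$ of minimal $x$-degree via the Newton polygon of $P_w$, and then verifies that the partial-sum structure of the exponents $T_i := a_0 + a_1 + \cdots + a_i$ forces the $(m+1)n$ occurrences of $X$ in $w$ to group into $m+1$ identical blocks of length $n$ separated by constant $A$-spacing $k$. This grouping is exactly $\pi_{m,k}$ applied to the shorter word whose polynomial is $P_w/\Psi_m(x^n y^{S+k})$, and induction closes. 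Granting the converse, if $w$ is not totally decomposable then $P_w(x^2,y^2)$ has a ``non-trivial'' absolutely irreducible factor $f \in \Qbar[x,y]$. Packaging $f$ with its Galois conjugates into a $\Q$-irreducible polynomial $F \in \Z[x,y]$ that divides $P_w(x^2,y^2)$, one shows that for all but finitely many primes $p$ the reduction of $F$ mod $p$ has a geometrically non-trivial component defined over $\mathbb{F}_p$ on which Weil's theorem gives a point-count of the form $p+O_F(\sqrt{p})$; subtracting the $O(1)$ intersection points with the coordinate axes yields the required $(x_p,y_p) \in (\Z/p\Z)^* \times (\Z/p\Z)^*$.

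The main obstacle is the converse factorization step. Products of the cyclotomic-in-monomial factors $\Psi_{m_i}(x^{n_i}y^{N_i})$ can interact non-trivially when the exponent vectors $(n_i,N_i)$ are commensurable or when the integers $m_i+1$ share common prime factors, so uniqueness of the decomposition up to regrouping of cyclotomic pieces requires a careful Newton-polygon or tropical analysis and bookkeeping over the lattice of exponents. A secondary technical difficulty arises in the Weil step: if the non-trivial factor $f$ is only absolutely irreducible over a proper extension of $\Q$, one must show that at least one of its Galois conjugates is $\mathbb{F}_p$-rational for \emph{all} but finitely many primes $p$, not merely for a density-one subset. The hypothesis that $f$ is not of the form $\alpha x^a y^b - \zeta$ should be the key to ruling out the corresponding cyclotomic Galois obstruction, but making this fully rigorous requires a careful analysis of the arithmetic of the splitting field of $f$.
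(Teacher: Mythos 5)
The statement you were asked about is not proved in the paper at all: it is stated as Conjecture~\ref{notsolvableConj}, and the authors only verify instances of it (Corollary~\ref{cor:infexamples}, plus machine checks for words of length at most $10$) via Corollary~\ref{mainUnivCor}, which requires a factor of $P_w(x^2,y^2)$ lying in $\Z[x,y]$ and irreducible over $\C$. Your proposal is a strategy sketch rather than a proof, and both of its load-bearing steps are genuinely open. The ``crux'' converse --- that if every absolutely irreducible factor of $P_w(x^2,y^2)$ over $\Qbar$ is a binomial $x^ay^b-\zeta$ then $w$ is totally decomposable --- is essentially the heart of the conjecture; it is at least as strong as Question~\ref{notsolvableConj2}, which the authors explicitly leave open. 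Your Newton-polygon/block-grouping argument is only described, not carried out, and you yourself flag the commensurability and regrouping issues (shared prime factors of the $m_i+1$, proportional exponent vectors) that make ``extract an outermost $\Psi_m(x^ny^{S+k})$ factor and induct'' non-obvious. (Your forward computations are fine and consistent with Lemma~\ref{compositiontomultiplication}: $P_{wA}=P_w$, $P_{Aw}=yP_w$, and $P_{\pi_{m,k}(w)}=\Psi_m(x^ny^{S+k})P_w$ for $w$ ending in $X$; note only that after extracting gcd's the absolutely irreducible pieces are binomials $x^ay^b-\zeta$ with not necessarily even exponents.)

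The second gap is also real and not merely technical. If the ``non-trivial'' absolutely irreducible factor $f$ is defined only over a proper extension of $\Q$, packaging its conjugates into a $\Q$-irreducible $F\in\Z[x,y]$ does not give what you need: for the conclusion ``for all but finitely many primes'' you must exhibit, for every large $p$, a geometrically irreducible component of $F \bmod p$ that is defined over $\mathbb{F}_p$, and in general Chebotarev-type obstructions make this fail on a positive-density set of primes (the toy example $x^2-ay^2$ with $a$ a nonsquare has an $\mathbb{F}_p$-rational geometric component only when $a$ is a quadratic residue mod $p$; there Weil gives nothing for half the primes). Your hope that ``non-triviality of $f$'' kills this obstruction is exactly the unproven arithmetic input; the paper sidesteps it by demanding a factor that is simultaneously in $\Z[x,y]$ and absolutely irreducible, so that Ostrowski plus Bach/Weil apply directly, and it is precisely the existence of such a factor for non--totally decomposable words that the authors cannot establish. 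So the proposal, as it stands, does not close either gap and does not constitute a proof of the conjecture.
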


The questions outlined above (e.g., asking whether a particular word $w$ is universal, or uniquely universal, or solvable in terms of radicals) are examples of decidability questions in first-order theories of groups.  Determining whether a set of equations has a solution in a group is known as the \textit{Diophantine problem}.  More generally, given a set of axioms for a class of groups, one would like to provide an algorithm which decides the truth or falsehood of any given sentence in the theory.  That such an algorithm exists for free groups follows from pioneering work of Kharlampovich and Myasnikov~\cite{KharMyas06} (see also the independent work of Sela~\cite{Sela06}), but it is still open whether one exists for free uniquely divisible groups~$F^{\Q}$.  The Diophantine problem for~$F^{\Q}$ does admit such an algorithm \cite{KharMyas98} although the time complexity of the algorithm described there is likely at least doubly exponential (the proof uses the decidability of Presburger arithmetic).  In contrast, Conjecture \ref{universalconj} says that the Diophantine problem of a single equation in one variable reduces to an easily verifiable combinatorial condition (total decomposability).

\begin{example}\label{X2AXex}
Consider the word $w = X^2AX$, which is not totally decomposable; we use Theorem \ref{thereduction} to show that $w$ is not universal.  Its word polynomial is 
	 \[ P_w(x,y) = 1 + x +x^2y. \]
We need to verify that the equation $1 + x^2 +(x^2y)^2=0$ has a nonzero solution modulo~$p$, for all sufficiently large primes~$p$.  A standard pigeonhole argument shows that for all primes $p$, there exist $a,b \in \Z$ with $a\neq 0$ such that $1 + a^2 + b^2 \equiv 0 \modulonospace{p}$.  If $b = 0$ and $p \geq 5$, then $1 + (-1+4^{-1})^2 + (a+a4^{-1})^2 \equiv 0 \modulonospace{p}$ so that we may assume both $a,b$ nonzero.  Setting $x = a$ and $y = ba^{-2}$ gives us our solution.

The word polynomial for the totally decomposable word $v = XAXAX$, on the other hand, is 
	\[P_v(x,y) = 1 + xy +x^2y^2.\] 
Let $p$ be a prime greater than~$3$.  If $x,y \in (\Z/p\Z)^*$ satisfy $P_v(x^2,y^2)=0$, then setting $z=x^2y^2$ we have $z^3 = 1$ and $z \neq 1$, which forces $p \equiv 1 \modulonospace{3}$.
\qed \end{example}

Recall that a polynomial over a field $K$ is \textit{absolutely irreducible} if it remains irreducible over every algebraic extension of $K$.  The next result shows that to verify Conjecture~\ref{notsolvableConj} for a particular word~$w$, it suffices to prove that a factor of $P_{w}(x^2,y^2)$ is absolutely irreducible.

\begin{proposition}\label{WeilConjCor}
Suppose $F\in \Z[x,y]$ satisfies $F(0,0)\neq 0$, and $F$ has a factor $f \in \Z[x,y]$ which is irreducible over $\C[x,y]$. Then the equation $F(x,y)=0$ has a solution $(x_p,y_p) \in (\Z/p\Z)^* \times (\Z/p\Z)^*$ for all but finitely many primes $p$.
\end{proposition}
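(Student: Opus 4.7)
The plan is to apply the Lang--Weil estimate (equivalently, Weil's theorem on curves over finite fields) to the absolutely irreducible factor $f$ after reduction modulo $p$. Since the zero locus of $f$ is contained in that of $F$, it suffices to produce $\mathbb{F}_p$-points on the curve $\{f=0\}$ that avoid both coordinate axes.

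First I would verify that neither $x$ nor $y$ divides $f$ in $\Z[x,y]$. Indeed, if $x\mid f$ then $x\mid F$, which forces $F(0,0)=0$, contradicting the hypothesis; similarly for $y$. Consequently $f(0,y)\in\Z[y]$ and $f(x,0)\in\Z[x]$ are nonzero polynomials, hence remain nonzero upon reduction modulo $p$ for all but finitely many primes $p$.

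Next, I would invoke the classical theorem of Noether (see, e.g., Schmidt's book on equations over finite fields) that an absolutely irreducible polynomial $f\in\Z[x,y]$ has absolutely irreducible reduction $\bar f\in \mathbb{F}_p[x,y]$ for all but finitely many $p$: absolute irreducibility is equivalent to the nonvanishing of a finite list of explicit integer polynomials in the coefficients of $f$, and only finitely many primes can divide those integer values. For any such good prime $p$, the affine curve $C_p=\{\bar f=0\}$ is geometrically irreducible of dimension~$1$ and of degree bounded by $\deg f$, independent of $p$. Weil's theorem, applied to its smooth projective model and corrected by an $O(1)$ contribution from singular and infinite points, then yields
\[
\bigl|C_p(\mathbb{F}_p)\bigr| \;=\; p + O(\sqrt{p}),
\]
with the implied constant depending only on $\deg f$.

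Finally, the number of points of $C_p$ lying on either coordinate axis is at most $\deg_x \bar f + \deg_y \bar f \le 2\deg f$, since $\bar f(0,y)$ and $\bar f(x,0)$ are nonzero for good $p$. Therefore the number of $(x_p,y_p)\in C_p(\mathbb{F}_p)$ with $x_py_p\neq 0$ is at least $p - O(\sqrt p) - 2\deg f$, which is positive for all sufficiently large $p$, and any such point gives the required solution in $(\Z/p\Z)^*\times(\Z/p\Z)^*$. The main obstacle I anticipate is locating and invoking the correct form of Noether's persistence result for absolute irreducibility under reduction modulo $p$ with an effective (or at least finite) exceptional set of primes; the Weil bound and the coordinate-axis count are then routine.
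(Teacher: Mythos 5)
Your proposal is correct and follows essentially the same route as the paper: preserve absolute irreducibility of the factor $f$ modulo all but finitely many primes (you cite the Noether-forms argument, the paper cites Ostrowski), apply a Weil-type point count to the resulting curve over $\mathbb{F}_p$ (the paper uses Bach's explicit bound $p - d^2\sqrt{p}$ on the projective closure, you use the equivalent $p + O(\sqrt{p})$ with bounded corrections), and then discard the boundedly many points on the coordinate axes, using $F(0,0)\neq 0$ to see that $x,y \nmid f$. No gaps; the differences are only in which standard references are invoked.
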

\begin{proof}
By a theorem of Ostrowski \cite{Ostrowski19}, the factor $f(x,y)$ is absolutely irreducible modulo all sufficiently large primes $p$ (see also \cite[Corollary 2.2.10]{Marker10} for a modern perspective from model theory).  For such a prime $p$, the homogenization $h(x,y,z) \in \Z[x,y,z]$ of $f$ is also absolutely irreducible modulo $p$, and thus defines an irreducible projective curve over $\overline{\mathbb F}_p$.  By results of Bach \cite{Bach96} (see also Aubry-Perret \cite{AubPer96}), extending a classical theorem of Weil, the curve defined by $h$ has at least $p - d^2 \sqrt{p}$ projective points all of whose coordinates lie in $\mathbb F_p$, where  $d$ is the degree of $f$.  Since $F(0,0)\neq 0$, the polynomial $f$ is not divisible by $x$ or $y$, so at most $3d$ of these points lie on the $x$-axis, $y$-axis, or the line at infinity.  It follows that for all sufficiently large primes $p$, there exists a point $(x_p,y_p) \in (\Z/p\Z)^* \times (\Z/p\Z)^*$ with $f(x_p,y_p)=0$.
\end{proof}

\begin{corollary}\label{mainUnivCor}
If $w$ is a word in the alphabet $\{X,A\}$ beginning with $X$, and if $P_{w}(x^2,y^2)$ has a factor $f \in \Z[x,y]$ such that $f$ is irreducible in $\C[x,y]$, then~$w$ is not universal.
\end{corollary}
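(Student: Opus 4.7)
The plan is to assemble the conclusion by chaining Proposition \ref{WeilConjCor} with Theorem \ref{thereduction}, once we have verified the nonvanishing hypothesis at the origin. Set $F(x,y) := P_{w}(x^{2},y^{2}) \in \Z[x,y]$. My first step is to observe that, because $w$ begins with $X$, the leading exponent $a_{0}$ in the presentation $w = A^{a_{0}}XA^{a_{1}}X\cdots A^{a_{n-1}}XA^{a_{n}}$ equals $0$. Consequently, the constant term of $P_{w}(x,y)$ is $y^{a_{0}} = 1$, so $F(0,0) = P_{w}(0,0) = 1 \neq 0$. This is the only mild check that needs to be done; everything else is a direct invocation of earlier results.

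Next, I would feed $F$ and the given factor $f$ (irreducible in $\C[x,y]$) into Proposition \ref{WeilConjCor}. Its conclusion is exactly that $F(x,y) = 0$ has a solution $(x_{p},y_{p}) \in (\Z/p\Z)^{*} \times (\Z/p\Z)^{*}$ for all but finitely many primes $p$. Rewriting this statement in terms of $P_{w}$, it asserts that $P_{w}(x^{2},y^{2}) = 0$ has such a nonzero solution modulo almost every prime — which is precisely the hypothesis of Theorem \ref{thereduction}.

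Finally, I would apply Theorem \ref{thereduction} to obtain the uniquely divisible group $G$ (from the statement of that theorem) and elements $A,B \in G$ for which the equation $w(X,A) = B$ has no solution in $G$. Since $G$ is a uniquely divisible group and $A,B \in G$, the absence of a solution precisely says that $w$ is not universal in the sense defined before Conjecture \ref{universalconj}. This completes the argument.

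There is no substantive obstacle here: the two hard theorems have already done the heavy lifting, and the sole job of this corollary is to package them together. The only point that requires any attention is the verification that $F(0,0) \neq 0$, which is where the hypothesis ``$w$ begins with $X$'' enters, ensuring that the absolutely irreducible factor $f$ of $F$ is neither $x$ nor $y$ — and hence that Proposition \ref{WeilConjCor} is genuinely applicable rather than producing a solution forced onto a coordinate axis.
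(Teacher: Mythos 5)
Your proof is correct and is exactly the argument the paper intends (the corollary is stated without proof precisely because it is the direct concatenation of Proposition \ref{WeilConjCor} and Theorem \ref{thereduction}): since $w$ begins with $X$ we have $a_0=0$, so $P_w(0,0)=1\neq 0$, the proposition applies to $F=P_w(x^2,y^2)$, and the theorem then produces $A,B\in G$ with $w(X,A)=B$ unsolvable, contradicting universality. No gaps.
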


\begin{example}\label{X2AXex2}
We show the usefulness of Corollary~\ref{mainUnivCor} by revisiting Example \ref{X2AXex}.  The word $w=X^2AX$ has \[ P_w(x^2,y^2) = 1 + x^2 + x^4 y^2, \] which is irreducible over $\C$ (since $1+x^2$ is not a square in $\C(x)$).  It follows that $X^2AX$ is not universal.
In contrast, the totally decomposable word $v=XAXAX$ has \[ P_v(x^2,y^2) = 1 + x^2 y^2 + x^4 y^4 = (1 + xy + x^2 y^2)(1-xy+x^2y^2). \] 
Each factor on the right side is irreducible over $\Z$ but factors over $\C$.
\qed \end{example}

In Section \ref{sec:notsolvable}, we use Corollary \ref{mainUnivCor} to verify Conjecture \ref{notsolvableConj} for the following infinite families of words. 


\begin{corollary}\label{cor:infexamples}
The following families of words do not have their equations solvable in terms of radicals:  
\begin{align*}
&X^nAX^m, && m,n \geq 1, \ m\neq n; \\
&XA^{m+2n}XA^{m+n}XA^mX, && m \geq 0, n \geq 1; \\
&XAX^nAX, && n \geq 3; \\
&X^2(AX)^nX, && n \geq 2.
\end{align*}

\old{
\begin{enumerate}
\item $X^nAX^m, \quad m,n \geq 1, \ m\neq n$;
\item $XA^{m+2n}XA^{m+n}XA^mX, \quad m \geq 0, n \geq 1$;
\item $XAX^nAX, \quad n \geq 3$;
\item $X^2(AX)^nX, \quad n \geq 2$.
\end{enumerate}
}
\end{corollary}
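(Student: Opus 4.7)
My plan is to apply Corollary \ref{mainUnivCor} to each of the four families: for every word $w$ on the list I will exhibit a factor of $P_w(x^2, y^2)$ in $\Z[x,y]$ that is irreducible in $\C[x,y]$. Since every such $w$ begins with $X$, the corollary immediately yields non-universality, and by the straightforward equivalence $(3) \Leftrightarrow (4)$ of Conjecture~\ref{universalconj} this is equivalent to non-solvability of $w(X,A)=B$ in radicals.

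The first step is a direct computation of the word polynomials from the definition: $P_{X^nAX^m}(x,y) = \sum_{i=0}^{n-1}x^i + yx^n\sum_{i=0}^{m-1}x^i$; $P_{XA^{m+2n}XA^{m+n}XA^mX}(x,y) = 1 + xy^{m+2n} + x^2y^{2m+3n} + x^3y^{3m+3n}$; $P_{XAX^nAX}(x,y) = 1 + y(x+\cdots+x^n) + x^{n+1}y^2$; and $P_{X^2(AX)^nX}(x,y) = 1 + x + \sum_{k=1}^n x^{k+1}y^k + x^{n+2}y^n$. For the second family one verifies the identity
\[ P_w(x,y) = (1 + xy^{m+n})\bigl(1 + xy^{m+n}(y^n - 1) + x^2 y^{2m+2n}\bigr), \]
so the natural candidate absolutely irreducible factor of $P_w(x^2,y^2)$ is $f(x,y) = 1 + x^2y^{2m+2n}(y^{2n}-1) + x^4y^{4m+4n}$, a quadratic in $x^2$. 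For the first family one divides $P_w(x^2,y^2)$ by the $\Z[x]$-gcd of its $y$-coefficients, namely $(x^{2\gcd(n,m)}-1)/(x^2-1)$, obtaining $g(x) + y^2 h(x)$ with $\gcd(g,h)=1$. For the third family, $P_w(x^2,y^2)$ is itself a quadratic in $y^2$; for the fourth family, the whole polynomial $P_w(x^2,y^2)$ serves as the candidate.

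The central argument for the first three families is a discriminant computation. Each candidate is a quadratic in $u \in \{x^2, y^2\}$, so irreducibility in $\C(v)[u]$ is equivalent to the discriminant being a non-square in $\C(v)$. Direct calculation produces, in each case, a perfect square times an explicit squarefree polynomial: for the second family the discriminant is $y^{4m+4n}(y^{2n}-3)(y^{2n}+1)$, whose non-square part has $4n$ distinct roots; for the first family it reduces to a quotient of distinct cyclotomic polynomials that fails to be a square precisely because $m \neq n$; for the third family one gets $x^4\bigl[(x^{2n}-1-2x^{n-1}(x^2-1))(x^{2n}-1+2x^{n-1}(x^2-1))\bigr]/(x^2-1)^2$, each factor of which is shown to be squarefree for $n \geq 3$ via the palindromic substitution $t = u + 1/u$ (with $u = x^2$), reducing to an irreducible (or at worst squarefree) low-degree polynomial in $t$. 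Because each candidate is a polynomial in $u = x^2$ or $y^2$, one must also rule out the ``twisted'' factorization $(Au^2 + Bu + C)(Au^2 - Bu + C)$ respecting $u \to -u$; matching coefficients forces $B^2$ to equal an explicit polynomial which, by the same squarefree analysis, is not a square in $\C[v]$ under any sign choice of $A, C$.

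The main obstacle is the fourth family, where $P_w(x^2,y^2) = (1+x^2) + \sum_{k=1}^n x^{2k+2}y^{2k} + x^{2n+4}y^{2n}$ has $y^2$-degree $n$, so the quadratic-discriminant shortcut does not apply. The Newton polygon is the parallelogram with vertices $(0,0)$, $(2,0)$, $(2n+4,2n)$, $(2n+2,2n)$, which decomposes as a Minkowski sum of two lattice segments, so Gao's geometric criterion is unavailable. The plan here is a direct argument: set $Y = y^2$ and analyze the resulting degree-$n$ polynomial in $Y$ over $\C(x)$, whose constant term is $1+x^2$ and whose leading coefficient is $x^{2n+2}(1+x^2)$; use the rational-root theorem over $\C(x)$ together with an induction on $n$ (or an explicit resultant/discriminant computation in $x$) to exclude any proper factorization in $\C(x)[Y]$. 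The twisted factorization $(Ay^2+By+C)(Ay^2-By+C)$ is then ruled out by the parity obstruction $A^2 = x^{2n+2}(1+x^2)$ being non-square in $\C[x]$, generalizing the $n = 2$ base case. Once absolute irreducibility is established for each family, Corollary~\ref{mainUnivCor} completes the proof.
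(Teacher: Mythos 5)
Your treatment of the first three families is essentially the paper's own argument (view the polynomial as a quadratic in $x^2$ or $y^2$ over the function field in the other variable, show the discriminant is not a square, and rule out factorizations not respecting the squared variable), and your explicit handling of the ``twisted'' factorization and of the content factor in the family $X^nAX^m$ is if anything more careful than the paper's. But there is a genuine gap in the fourth family. For $w=X^2(AX)^nX$ you propose to prove directly that $P_w(x^2,y^2)$ has an absolutely irreducible factor, and the core step --- irreducibility of the degree-$n$ polynomial in $Y=y^2$ over $\C(x)$ --- is left as a plan: ``rational-root theorem over $\C(x)$ together with an induction on $n$ (or an explicit resultant/discriminant computation).'' The rational-root theorem only excludes factors of degree $1$ in $Y$, while for $n\geq 4$ you must exclude factors of every degree up to $n-1$; no inductive structure is exhibited, and you yourself note that the Newton polygon decomposes so Gao-type criteria are unavailable. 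As written this step is a hope, not a proof. The paper avoids the problem entirely: it never proves any irreducibility statement for this family, but instead substitutes $Y=A^{1/2}XA^{1/2}$, $C=A^{-1}$, $D=A^{1/2}BA^{1/2}$, which transforms $X^2(AX)^nX=B$ into $YCY^{n+1}CY=D$, i.e.\ the third family $XAX^{n+1}AX$ with exponent $n+1\geq 3$, so Lemma~\ref{XAXbAXthm} finishes it. If you want to keep your route, you must actually carry out the irreducibility argument in $\C(x)[Y]$ for all $n$; the conjugation trick is the short way around.

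A secondary, fixable weakness is the family $XAX^nAX$. Your discriminant is correct, but the claim that each factor $x^{2n}-1\pm 2x^{n-1}(x^2-1)$ is squarefree does not by itself give non-squareness: both factors vanish at $x=\pm 1$ (these roots are absorbed by the $(x^2-1)^2$ in the denominator), so you must analyze the common part and the quotient, and the ``palindromic substitution'' reduction is only sketched, with the degree in $t$ growing with $n$ rather than being low. The paper's argument here is both complete and simpler: if the discriminant $D(x)=(1+x^2+\cdots+x^{2n-2})^2-4x^{2n-2}$ were a square in $\C[x]$ it would be a square in $\Z[x]$, contradicting the fact that $D(1)=n^2-4$ is not a square in $\Z$ for $n\geq 3$. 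Families $X^nAX^m$ and $XA^{m+2n}XA^{m+n}XA^mX$ in your proposal match the paper's proofs and are fine.
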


Using Corollary~\ref{mainUnivCor} and the symbolic computation software Maple, we have also verified Conjecture~\ref{universalconj} for all words of length at most~$10$.  The most difficult part of the computation is to check whether a given bivariate polynomial over~$\Z$ is irreducible over~$\C$.  This can be done in polynomial-time using the algorithm of Gao \cite{Gao03} (and is implemented in Maple).


We do not know if the condition in Corollary \ref{mainUnivCor} is sufficient to prove Conjecture \ref{universalconj}.

\begin{question}\label{notsolvableConj2}
If the word $w$ is not totally decomposable, must $P_{w}(x^2,y^2)$ have a factor in $\Z[x,y]$ which is irreducible over $\C[x,y]$?
\end{question}

The remainder of the paper is organized as follows.  In section~\ref{sec:background}, we give additional motivation arising from the BMV trace conjecture in quantum statistical mechanics.  In section~\ref{sec:radWords}, we review the basic properties of uniquely divisible groups and construct a free uniquely divisible group on two free generators.  This construction allows us to define the notion of solvability in terms of radicals, but it is not needed for the proof of Theorem~\ref{thereduction}.  Section~\ref{sec:examples} describes some important examples of uniquely divisible groups.  Most of the standard examples have the property that \emph{every} word equation with nonnegative exponents has a unique solution; the need to construct more exotic groups is part of what makes Conjecture~\ref{universalconj} so difficult.  Section~\ref{sec:wordpoly} discusses properties of the word polynomial $P_w$, and section~\ref{sec:reductionthm} is devoted to the proof of Theorem~\ref{thereduction}.  These two sections form the heart of the paper.  Finally, Section \ref{sec:notsolvable} contains the proof of Corollary~\ref{cor:infexamples}.

\section{Background and Motivation}\label{sec:background}

In this section we explain briefly the connections to matrix theory which inspired our work.  None of this material will be needed for the sequel, so the reader may safely skip to Section~\ref{sec:radWords} if desired.  The Lieb-Seiringer formulation \cite{LiebSeir04} of the long-standing Bessis-Moussa-Villani (BMV) trace conjecture \cite{BMV75, KJC80, Moussa00, otherBMVtry2, HillBMV, Haegele, LS, KS, KS2, Dykema} in statistical physics says that the trace of $S_{m,k}(A,B)$, the sum of all words of length $m$ in $A$ and $B$ with $k$ $B$s, is nonnegative for all $n \times n$ positive semidefinite matrices $A$ and $B$.  In the case of $2 \times 2$ matrices, more is true: every word in two positive semidefinite letters has nonnegative trace (in fact, nonnegative eigenvalues). 
It was unknown whether such a fact held in general until \cite{HillJohn02} appeared where it was found (with the help of Shaun Fallat) that the word $w = BABAAB$ has negative trace 
with the positive definite matrices
\[A_1=\left[\begin{array}{@{}ccc@{}}
1&20&210\\
20&402&4240\\
210&4240&44903
\end{array}\right]\quad \mbox{and}\quad B_1=\left[\begin{array}{@{}ccc@{}}
36501&-3820&190\\
-3820&401&-20\\
190&-20&1
\end{array}\right].\]  

Finding such examples is surprisingly difficult, and randomly generating millions of matrices (from the Wishart distribution) fails to produce them.  
Nonetheless, it is believed that most words can have negative trace, and it was conjectured \cite{HillJohn02} that 
if a word has positive trace for every pair of real positive definite $A$ and $B$, then it is a palindrome or a product of two palindromes (the converse is well-known).  If we replace the words ``positive trace" in the previous sentence with ``positive eigenvalues," we obtain a weaker conjecture which was also studied in \cite{HillJohn02}.  Further evidence for this conjecture can be found in \cite{HillJohn03}, where it was proved that a generic word has positive definite complex Hermitian matrices $A$ and $B$ giving it a nonpositive eigenvalue.  

Positive definite matrices which give a word a negative trace are also potential counterexamples to the BMV conjecture, and it is useful to be able to generate these matrices (see \cite[\textsection 4.1]{Hansen06} and \cite[\textsection 11]{ArmHill07} for two such examples).  As remarked above, this is difficult since random sampling does not seem to work.  This discussion explains some of the subtlety of the BMV conjecture: most words occurring in $S_{m,k} (A,B)$ likely can be made to have negative trace; however, a particular word has a small proportion of matrices which witness this. 

Although the set of $n \times n$ positive definite matrices is not a group for $n > 1$, every positive definite matrix has a unique positive definite $m$-th root for any $m$.  More remarkably, it turns out \cite{HillJohn04, ArmHill07} that every word equation $w(X,A) = B$ with $w$ palindromic (and containing at least one $X$) has a positive definite solution~$X$ for each pair of positive definite $A$ and $B$ (although this solution can be non-unique \cite{ArmHill07}).
Using $A_1$ and $B_1$, it follows that any word of the form  $wAwAAw$ with $w = w(B,A)$ palindromic (and containing at least one $B$) can have negative trace.  This gives an infinite family verifying the conjecture of \cite{HillJohn02}, and moreover, provides an infinite number of potential counterexamples to the BMV conjecture.  The existence proof in \cite{HillJohn04} uses fixed point methods, although for special cases (e.g. when $w$ contains four or less $X$s), one may express solutions $X$ explicitly (and computationally efficiently) in terms of $A, B$ and fractional powers \cite[\textsection 5]{ArmHill07}.  Computing solutions without using these formal representations ``in terms of radicals" is difficult \cite[Remark 11.3]{ArmHill07}, and it is believed that most equations do not have solutions expressable in this manner.   For instance, there is no known expression for the solution to $XAX^3AX = B$ although there is always a unique positive definite solution \cite{LawLim06}.  



\section{Radical words and the free uniquely divisible group}\label{sec:radWords}

In this section we review some basic properties of uniquely divisible groups, and construct the free uniquely divisible group on two generators.  This construction allows us to define precisely the notion of ``solvability in terms of radicals'' (but we emphasize that the proof of Theorem~\ref{thereduction} does not rely on this construction).  The following lemma shows that rational powers of group elements are well-defined and behave as expected. 

\begin{lemma}\label{lem:rationalPowLemma}
Let $G$ be a uniquely divisible group and $a \in G$.  Define $a^{n/m} := (a^{n})^{1/m}$ for $n \in \Z$ and $0 \neq m \in \N$, and define $a^0 := 1$.  Then if $p,q \in \Q$, we have $(a^{p})^{q} = (a^{q})^{p} = a^{pq}$ and $a^{p}a^{q} = a^{q}a^{p} = a^{p + q}$ . 
\end{lemma}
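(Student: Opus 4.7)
The plan is to reduce everything to uniqueness of $m$-th roots and the (obvious) laws for \emph{integer} powers, by raising candidate equalities to a common exponent and then invoking uniqueness.

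First I would prove a small commutativity lemma: if $x,y\in G$ satisfy $xy=yx$, then $x$ commutes with $y^{1/m}$ for any $m\geq 1$. Setting $z=y^{1/m}$, observe $(xzx^{-1})^m = xz^mx^{-1}=xyx^{-1}=y = z^m$, so by uniqueness of $m$-th roots $xzx^{-1}=z$. Applied iteratively, this shows that for any integers $n,n'$ and positive integers $m,m'$, the elements $(a^n)^{1/m}$ and $(a^{n'})^{1/m'}$ commute: start from $a^n a^{n'}=a^{n'}a^n$, then pass to $m$-th and $m'$-th roots in turn.

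Next I would verify that $a^{n/m}:=(a^n)^{1/m}$ is well-defined, i.e., depends only on the reduced fraction $n/m$. Suppose $n/m = n'/m'$, so $nm'=n'm$, and let $x=(a^n)^{1/m}$, $y=(a^{n'})^{1/m'}$. Then
\[
x^{mm'} = (x^m)^{m'} = (a^n)^{m'} = a^{nm'} = a^{n'm} = (y^{m'})^{m} = y^{mm'},
\]
so $x=y$ by uniqueness of the $mm'$-th root. The same technique handles the other two claims. For $a^p a^q = a^{p+q}$, write $p=n/m$, $q=n'/m'$, and set $x=a^p$, $y=a^q$; then $xy=yx$ by the previous paragraph, and $(xy)^{mm'} = x^{mm'}y^{mm'} = a^{nm'}a^{n'm} = a^{nm'+n'm}$, which is also $(a^{p+q})^{mm'}$ by the well-definedness of $a^{(nm'+n'm)/(mm')}$. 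Uniqueness of the $mm'$-th root gives $xy=a^{p+q}$, and commutativity gives $a^q a^p = a^{p+q}$ identically. For $(a^p)^q = a^{pq}$, let $b=a^p$ and $c=(b^{n'})^{1/m'}$; computing $c^{mm'}=(c^{m'})^m=(b^{n'})^m=(b^m)^{n'}=(a^n)^{n'}=a^{nn'}=(a^{pq})^{mm'}$ and applying uniqueness yields $c=a^{pq}$. The identity $(a^q)^p = a^{pq}$ follows by symmetry of $pq=qp$.

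Nothing here is deep; the statement amounts to careful bookkeeping. The one point that requires a small argument rather than a direct calculation is the commutativity of $a^p$ and $a^q$, and this is the step I would be most careful to state first, since the additive law $a^p a^q = a^{p+q}$ implicitly invokes it when expanding $(xy)^{mm'}$. Everything else follows by the same two-step pattern: raise to a well-chosen integer power, reduce to an identity of integer powers of $a$, and appeal to unique divisibility.
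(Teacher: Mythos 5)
Your proposal is correct and follows essentially the same route as the paper: the crucial step in both is the conjugation trick combined with uniqueness of $m$-th roots (the paper shows $a$ commutes with $a^{1/m}$ via $b=aa^{1/m}a^{-1}$, $b^m=a$, and then $a^{1/\ell}$ with $a^{1/m}$ via $c=a^{1/m}a^{1/\ell}a^{-1/m}$, $c^\ell=a$, which is exactly your commutativity lemma applied twice), and the power laws are then reduced to integer-exponent identities plus uniqueness of roots. Your write-up is simply more explicit about well-definedness of $a^{n/m}$ and about raising to the common exponent $mm'$, which the paper leaves as routine bookkeeping.
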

\begin{proof}
The first claim follows from the observation that $(a^{n})^{1/m} = ((a^{1/m})^{mn})^{1/m} = (a^{1/m})^{n}$.  To prove the second claim, it suffices to check that $a^{1/m}$ and $a^{1/\ell}$ commute for positive integers $m$ and $\ell$.  Let $b = a a^{1/m} a^{-1}$.  Then $b^m = a$, so $a$ commutes with $a^{1/m}$.  Now letting $c = a^{1/m}a^{1/\ell}a^{-1/m}$, we have $c^\ell = a^{1/m} a a^{-1/m} = a$, so $a^{1/\ell}$ commutes with $a^{1/m}$.
\end{proof}

A detailed study of uniquely divisible groups can be found in the thesis of Baumslag~\cite{Baum60} where they are called \textit{divisible R-groups}.\footnote{In fact, Baumslag studied a generalization of uniquely divisible groups to those having unique $p$-th roots for all $p\in \omega$, where $\omega$ is a subset of the primes.  We restrict ourselves to the case when $\omega$ is the set of all prime numbers.}  See also~\cite{Ledlie71} for a study of the metabelian case.  As remarked in~\cite{Baum60}, one of the difficulties is that there is no clear normal form for uniquely divisible group elements (for example, see (\ref{riccatieq}) from the introduction).  There is, however, the notion of a free uniquely divisible group which comes out of Birkhoff's theory of ``varieties of algebras'' \cite{Birkhoff35}.  Since the construction is simple, we briefly outline the main ideas here.  Our perspective is model-theoretic (see \cite{Marker10} for background) although we will use only basic notions from that subject.

Let $T$ be the first-order theory of uniquely divisible groups.  The underlying language and axioms of this theory are those of groups, with an additional (countably infinite) set of axioms expressing that every element has a unique $m$-th root for each positive integer~$m$.  Consider the smallest set~$S$ of finite, formal expressions containing letters $\{A,B\}$, exponents of the form $\phantom{}^{n/m}$ ($n \in \Z$, $0\neq m \in \N$), and balanced parentheses that is closed under taking concatenations and powers (and contains the empty expression).  For example, $S$ contains the two rightmost expressions in (\ref{riccatieq}). 

If $G$ is a uniquely divisible group and $a,b \in G$, then an expression $e = e(A,B) \in S$ defines unambiguously (by Lemma \ref{lem:rationalPowLemma}) an element $e(a,b) \in G$ by replacing letters $\{A,B\}$ with corresponding group elements $\{a,b\}$ and then evaluating the result in $G$.   When two expressions $e,f \in S$ evaluate to the same group element for each pair $a,b \in G$ in every uniquely divisible group $G$, we write $e \sim f$.  For instance, the two rightmost expressions in (\ref{riccatieq}) are equivalent in this way.  Although we will not need it here, G\"odel's completeness theorem 
(along with soundness) implies that $e \sim f$ if and only if there is a (finite) formal proof from the axioms of $T$ that they are equal.

Note that $\sim$ is an equivalence relation on $S$, and we write $[e]$ for the equivalence class containing $e \in S$.  

\begin{definition}\label{defn:freeUDG}
The set $\mathcal{F} := \{ [e] : e \in S\}$ with multiplication $[e] \cdot [f] = [ef]$ is called the \textit{free uniquely divisible group} on letters $L = \{A,B\}$.
\end{definition}

The definition extends in the obvious way to define the free uniquely divisible group on any set $L$, but (except for a remark at the very end of the paper) we shall only use the case of two generators.
 
The main facts about $\mathcal{F}$ that we will need are summarized in the following lemma.  We remark that any homomorphism $\psi: F \to G$ between uniquely divisible groups is easily seen to satisfy $\psi(a^q) = \psi(a)^q$ for all $a\in F$ and $q \in \Q$. 

\begin{lemma}\label{lem:mainFreeUDG}
$\mathcal{F}$ is a uniquely divisible group.  Moreover, $\mathcal{F}$ satisfies a universal property with respect to the map $\theta: L \to \mathcal{F}$ sending $A \mapsto [A]$ and $B \mapsto [B]$:  Given any uniquely divisible group $G$ and any map $\phi: L \to G$, there exists a unique homomorphism (of uniquely divisible groups) $\psi: \mathcal{F} \to G$ such that $\psi \circ \theta = \phi$.
\end{lemma}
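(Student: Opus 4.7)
The plan is to reduce every assertion about $\mathcal{F}$ to the corresponding universally quantified assertion about uniquely divisible groups, exploiting the fact that $\sim$ was defined precisely so that an identity holds between expressions iff it holds in every uniquely divisible group under every assignment.

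First I would check that the operations on $\mathcal{F}$ are well-defined. If $e\sim e'$ and $f\sim f'$, then for every uniquely divisible group $G$ and every assignment $a,b\in G$ we have $(ef)(a,b)=e(a,b)f(a,b)=e'(a,b)f'(a,b)=(e'f')(a,b)$, so $ef\sim e'f'$ and $[e]\cdot[f]:=[ef]$ is well-defined. The same argument shows the rational-power operation $[e]\mapsto[e^{q}]$ is well-defined on $S/\sim$. Associativity, the identity law (with $[\,\varepsilon\,]$ for the empty expression $\varepsilon$), and the inverse law (using the exponent $-1$) all reduce to the corresponding identities holding in every uniquely divisible group, hence pass to $\mathcal{F}$. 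Thus $\mathcal{F}$ is a group.

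Next I would verify unique divisibility. Given $[e]\in\mathcal{F}$ and $m\geq 1$, set $X:=[e^{1/m}]\in\mathcal{F}$; then $X^{m}=[(e^{1/m})^{m}]=[e]$ because for every uniquely divisible $G$ and $a,b\in G$, $((e^{1/m})(a,b))^{m}=e(a,b)$ by definition of $\phantom{}^{1/m}$ in $G$. If also $[f]^{m}=[e]$, then $f(a,b)^{m}=e(a,b)$ in every such $G$, and unique divisibility in $G$ forces $f(a,b)=e(a,b)^{1/m}=(e^{1/m})(a,b)$, whence $f\sim e^{1/m}$ and $[f]=X$. So roots exist and are unique in $\mathcal{F}$.

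For the universal property, given $\phi:L\to G$ I would define $\psi:\mathcal{F}\to G$ by evaluation, $\psi([e]):=e(\phi(A),\phi(B))$. Well-definedness of $\psi$ is exactly the defining property of $\sim$: if $e\sim f$ then in particular $e(\phi(A),\phi(B))=f(\phi(A),\phi(B))$. The homomorphism property $\psi([e][f])=\psi([e])\psi([f])$ is immediate from $(ef)(a,b)=e(a,b)f(a,b)$, and compatibility with rational powers $\psi([e]^{q})=\psi([e])^{q}$ follows since $(e^{q})(a,b)=e(a,b)^{q}$ in $G$ by Lemma \ref{lem:rationalPowLemma}. Composing with $\theta$ gives $\psi(\theta(A))=A(\phi(A),\phi(B))=\phi(A)$ and likewise for $B$, so $\psi\circ\theta=\phi$.

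Finally I would prove uniqueness of $\psi$. By construction, every element of $\mathcal{F}$ is represented by some $e\in S$, built by finitely many concatenations and rational-power operations applied to the letters $A,B$. Any homomorphism $\psi':\mathcal{F}\to G$ of uniquely divisible groups that satisfies $\psi'\circ\theta=\phi$ must commute with products, inverses, and (as noted in the remark preceding the lemma) rational powers; induction on the length of the expression $e$ then forces $\psi'([e])=e(\phi(A),\phi(B))=\psi([e])$. I expect no real obstacle here; the only point to watch is that everything about rational exponents used at each step is legitimized by Lemma \ref{lem:rationalPowLemma}, so the chief task is really bookkeeping to confirm that the universal nature of $\sim$ makes each verification automatic.
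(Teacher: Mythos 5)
Your proposal is correct and follows essentially the same route as the paper, which defines multiplication and roots on equivalence classes, verifies unique divisibility by evaluating representatives in every uniquely divisible group, and defines $\psi([e]) = e(\phi(A),\phi(B))$; the paper simply leaves the routine verifications (including uniqueness of $\psi$) to the reader, which you spell out. No discrepancies to flag.
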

\begin{proof}
That the multiplication in Definition \ref{defn:freeUDG} is well-defined and that $\mathcal{F}$ forms a group follow directly from the definitions.  Moreover, every element $[e] \in \mathcal{F}$ has an $m$-th root, namely $[(e)^{1/m}]$.  If $[f]$ is another $m$-th root of $[e]$, then for every uniquely divisible group~$G$ and all $a,b\in G$, we have $f(a,b)^m = e(a,b)$ and thus $f(a,b) = (e(a,b))^{1/m}$.  It follows that $[f] = [(e)^{1/m}]$, and thus~$\mathcal{F}$ is uniquely divisible.

Finally, one checks from the definitions that \[\psi([e(A,B)]) := e(\phi(A),\phi(B)), \ \ [e] \in \mathcal{F}\] is a well-defined homomorphism that satisfies the universal property described.
\end{proof}

This discussion allows us to formally define the concept of solution in terms of radicals mentioned in the introduction (specifically, in the statement of Conjecture \ref{universalconj}).

\begin{definition}\label{def:radicalWord}
A word $w$ is called \textit{radical} (and has equation $w(X,A)=B$ \textit{solvable in terms of radicals}) if the equation $w(X,[A]) = [B]$ has a solution $X \in \mathcal{F}$.
\end{definition}

We now connect this definition with the idea of word equations having solutions in radicals.
Let $G$ be a uniquely divisible group.  A subgroup $R \subseteq G$ is called \emph{radical} if $x^m \in R$ implies $x \in R$ for all $x\in G$ and all positive integers $m$.   Given any subset~$H$ of~$G$, recursively define sets~$R_n$ for $n\geq 0$ by setting~$R_0 = H$ and
\[ R_{n+1} = \{ (xy)^q : \, x,y \in R_{n}, \ q \in \Q\}.\]
We call the union $\mathcal{R}(H) := \bigcup_{n \in \N} R_n$ the \textit{radical subgroup of $G$ generated by $H$}. 
One easily checks that $\mathcal{R}(H)$ is a radical subgroup of $G$ and that it is the intersection of all radical subgroups containing $H$. 

For a uniquely divisible group $G$ and $a,b \in G$, the subgroup $\mathcal{R}(\{a,b\})$ can be thought of as the radical expressions generated by $a$ and $b$.  Given a specific instance of the word equation $w(x,a) = b$, any solution $x \in \mathcal{R}(\{a,b\})$ can be viewed as one ``in terms of radicals."  Of course, whether a particular word equation in a group has a solution in terms of radicals in this sense depends on the group (and the elements $a,b \in G$).  However, as the next lemma shows, a radical word always has such a solution.  Note that this verifies the implication $(4) \Rightarrow (3)$ in Conjecture~\ref{universalconj}.

\begin{lemma}
Let $w$ be a radical word, and let $G$ be a uniquely divisible group.  For any $a,b \in G$, the equation $w(x,a) = b$ has a solution $x$ which lies in the radical subgroup $\mathcal{R}(\{a,b\}) \subseteq G$. 
\end{lemma}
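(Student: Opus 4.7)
The plan is to transfer the radical solution $X\in\mathcal{F}$ guaranteed by Definition~\ref{def:radicalWord} into a solution in $G$ using the universal property of $\mathcal{F}$, and then show that the resulting element lies in $\mathcal{R}(\{a,b\})$ by induction on the formal construction of a representative expression. First I would apply Lemma~\ref{lem:mainFreeUDG} with $\phi\colon\{A,B\}\to G$ given by $A\mapsto a$, $B\mapsto b$, obtaining a uniquely divisible group homomorphism $\psi\colon\mathcal{F}\to G$ with $\psi([A])=a$ and $\psi([B])=b$. Since $\psi$ preserves products and rational roots (as noted just before Lemma~\ref{lem:mainFreeUDG}), applying $\psi$ to both sides of $w(X,[A])=[B]$ yields $w(\psi(X),a)=b$. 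So $x:=\psi(X)$ is a solution in $G$, and it remains to show $x\in\mathcal{R}(\{a,b\})$.

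Next I would write $X=[e]$ for some $e\in S$, so that by the construction of $\psi$ in the proof of Lemma~\ref{lem:mainFreeUDG} we have $x=\psi([e])=e(a,b)$, the evaluation of the formal expression $e$ with $A\mapsto a$, $B\mapsto b$. I would then induct on the formation of $e\in S$ to show $e(a,b)\in\mathcal{R}(\{a,b\})$. The base case covers the letters: $A(a,b)=a\in R_0$ and $B(a,b)=b\in R_0$. For the inductive step, the two constructors to handle are concatenation and rational powers, and I would first record the easy fact that $R_n\subseteq R_{n+1}$ (for $x\in R_n$, take $y=x$ and $q=1/2$ in the definition of $R_{n+1}$, giving $(xx)^{1/2}=x$ by Lemma~\ref{lem:rationalPowLemma}), so that any two elements built at different levels can be promoted to a common level. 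If $e=fg$ and $f(a,b),g(a,b)\in R_n$, then choosing $q=1$ shows $f(a,b)g(a,b)\in R_{n+1}$. If $e=(f)^{n/m}$ with $f(a,b)\in R_n$, then setting $x=y=f(a,b)$ and $q=n/(2m)$ gives $f(a,b)^{n/m}=(f(a,b)\cdot f(a,b))^{n/(2m)}\in R_{n+1}$, again by Lemma~\ref{lem:rationalPowLemma}. The empty expression, evaluating to $1$, is handled by $(aa)^0\in R_1$.

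There is no serious obstacle here: the lemma is essentially a bookkeeping exercise combining the universal property of the free uniquely divisible group with the recursive definition of $\mathcal{R}(\{a,b\})$. The only mild subtlety I expect to need to handle cleanly is the mismatch between the constructors of $S$ (concatenation and arbitrary rational powers, applied to one expression at a time) and the single operation $(xy)^q$ used to define $R_{n+1}$; this is precisely what the inclusion $R_n\subseteq R_{n+1}$ and the $(ff)^{r/2}$ trick are designed to reconcile.
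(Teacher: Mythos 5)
Your proposal is correct and follows essentially the same route as the paper: obtain $\psi\colon\mathcal{F}\to G$ from Lemma~\ref{lem:mainFreeUDG}, push the solution $X$ of $w(X,[A])=[B]$ forward to $x=\psi(X)$, and check that $x\in\mathcal{R}(\{a,b\})$. The only difference is that where the paper simply asserts $\psi(\mathcal{F})=\mathcal{R}(\{a,b\})$ is straightforward from the construction, you verify the (only needed) inclusion $\psi(\mathcal{F})\subseteq\mathcal{R}(\{a,b\})$ explicitly by induction on expressions, which is a sound filling-in of that detail.
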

\begin{proof}
Let $X \in \mathcal{F}$ be a solution to $w(X,[A]) = [B]$.  By Lemma~\ref{lem:mainFreeUDG}, there exists a homomorphism $\psi: \mathcal{F} \to G$ with $\psi([A]) = a$ and $\psi([B]) = b$.  Let $x = \psi(X) \in G$ and notice that
\begin{equation*}
\begin{split}
b = \psi([B])  =  \ & \psi(w(X,[A])) \\
 = \ &  w(\psi(X),\psi([A]))  \\
= \ &  w(x,a).  \\  
\end{split}
\end{equation*}
Moreover, by the construction in Lemma \ref{lem:mainFreeUDG}, it is straightforward to verify that \mbox{$\psi(\mathcal{F}) = \mathcal{R}(\{a,b\})$}.  Thus, we also have $x \in \mathcal{R}(\{a,b\})$.
\end{proof}

We close this section with a proof of Lemma~\ref{solvablewordprop}, which shows that $(1) \Rightarrow (2)$ in Conjecture~\ref{universalconj}.  As the implications $(2) \Rightarrow (3) \Rightarrow (4)$ are trivial, the sole unproved implication is $(4) \Rightarrow (1)$. 

If~$u$ and~$v$ are words in the alphabet $\{X,A\}$, we define the composition $u \circ v$ as the word obtained by replacing each occurrence of the letter~$X$ in~$u$ by the word~$v$.

\begin{proof}[Proof of Lemma~\ref{solvablewordprop}]
We induct on the number of compositions involved in the word $w$, the base case $w = X$ being trivial.
If $w$ is totally decomposable and $w \neq X$, then $w = u \circ v$, where $u$ is totally decomposable and $v=\phi(X)$.  Here $\phi$ is one of the elementary morphisms $l,r,\pi_{m,k}$ in Definition~\ref{def:decomposable}.  By the inductive hypothesis, the equation $u(y,a)=b$ has a unique solution $y\in G$, and this solution can be expressed in terms of radicals, i.e., $y \in \mathcal{R}(\{a,b\})$.  Hence an element $x\in G$ solves the word equation $w(x,a)=b$ if and only if $\phi(x)=y$.   It therefore suffices to show that for any $y\in G$ there is a unique $x \in G$ such that $\phi(x)=y$.  The cases $\phi=\ell,r$ are immediate.
For the case $\phi=\pi_{m,k}$, let $z = a^{k/2} x a^{k/2}$, so that 
	\[ \phi(x) = a^{-k/2} z^{m+1} a^{-k/2}. \]
Then $x$ solves $\phi(x)=y$ if and only if $z = (a^{k/2} y a^{k/2})^{1/(m+1)}$; that is, if and only if
	\[ x = a^{-k/2} (a^{k/2} y a^{k/2})^{1/(m+1)} a^{-k/2}. \]
Moreover, $y \in \mathcal{R}(\{a,b\})$ implies $x \in \mathcal{R}(\{a,b\})$, which completes the inductive step.
\end{proof}

\section{Examples of Uniquely Divisible Groups}\label{sec:examples}

In addition to the free uniquely divisible group encountered in the previous section, there are many interesting examples of uniquely divisible groups.  We discuss several of them here, although this list is far from exhaustive.

Recall that a \textit{real closed field} is an ordered field~$K$ whose positive elements are squares and such that any polynomial of odd degree with coeffients in $K$ has a zero in $K$.  It follows from the definition that each positive element of a real closed field has a positive $m$-th root for every positive integer~$m$.  Moreover, since the field is ordered, this positive root is unique.  The group of positive elements of a real closed field is therefore uniquely divisible.  

The rest of our examples are noncommutative.  The free group $F_2$ on the alphabet $\{A,B\}$ may be embedded via the Magnus homomorphism~$\phi_M$ ~\cite{magnus} into the algebra $\Q \<\<a,b\>\>$ of noncommutative power series via $A \mapsto 1 + a$ and $B \mapsto 1 + b$.  The image of this map is a subgroup of the group $D$ of noncommutative power series with constant term~$1$.  Using the binomial series, it can be shown that~$D$ is uniquely divisible (see also Proposition \ref{prop:uniquepowerseries} below for another proof).  In particular, $F_2$ is a subgroup of a uniquely divisible group.

Our next result shows that every word equation with coefficients in $D$ has a unique solution in that set.  However, this solution might not be in the radical subgroup $\mathcal{R}(\phi_M(F_2))$ generated by $\phi_M(F_2)$.

\begin{proposition}\label{prop:uniquepowerseries}
For any $A_1,\ldots,A_m,B \in D$, the equation $\prod\nolimits_{i = 1}^m {(A_i X)} = B$ 
has a unique solution $X \in D$.
\end{proposition}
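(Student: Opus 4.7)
The plan is to prove existence and uniqueness simultaneously by successive approximation in the $\mathfrak{m}$-adic topology on $\Q\<\<a,b\>\>$, where $\mathfrak{m}$ denotes the two-sided ideal of noncommutative power series with vanishing constant term. Since $D = 1 + \mathfrak{m}$ and the ambient algebra is complete and Hausdorff with respect to the filtration $\mathfrak{m} \supseteq \mathfrak{m}^2 \supseteq \cdots$, it will suffice to construct a unique sequence $X_n \in D$ with
\[
F(X_n) \equiv B \pmod{\mathfrak{m}^{n+1}}, \qquad X_{n+1} \equiv X_n \pmod{\mathfrak{m}^{n+1}},
\]
where $F(X) := \prod_{i=1}^{m}(A_i X)$, and then to take the limit.

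The base case is $X_0 = 1$, since both sides of $F(X_0) \equiv B$ reduce modulo $\mathfrak{m}$ to the common constant term $1$. For the inductive step, I look for $X_{n+1}$ of the form $X_n(1+u)$ with $u \in \mathfrak{m}^{n+1}$. Expanding
\[
F\bigl(X_n(1+u)\bigr) \;=\; \prod_{i=1}^{m} \bigl(A_i X_n)(1+u)\bigr)
\]
and using that every product of elements of $D$ has constant term $1$, each term containing exactly one factor of $u$ reduces to $u$ modulo $\mathfrak{m}^{n+2}$, while terms with two or more factors of $u$ lie in $\mathfrak{m}^{2(n+1)} \subseteq \mathfrak{m}^{n+2}$. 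This yields the linearization
\[
F\bigl(X_n(1+u)\bigr) \;\equiv\; F(X_n) + m\,u \pmod{\mathfrak{m}^{n+2}}.
\]
Solving this against $B$ forces $u \equiv m^{-1}\bigl(B - F(X_n)\bigr) \pmod{\mathfrak{m}^{n+2}}$. Since $B - F(X_n) \in \mathfrak{m}^{n+1}$ by the inductive hypothesis and $m$ is a nonzero rational, such a $u$ lies in $\mathfrak{m}^{n+1}$ and is uniquely determined modulo $\mathfrak{m}^{n+2}$. The sequence $\{X_n\}$ is therefore Cauchy and converges to an element $X \in D$ with $F(X) = B$.

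Uniqueness is established by the same linearization. If $X', X'' \in D$ both satisfy $F(\cdot) = B$ and $X' \equiv X'' \pmod{\mathfrak{m}^{n+1}}$, write $X' = X''(1 + u)$ with $u \in \mathfrak{m}^{n+1}$; then $0 = F(X') - F(X'') \equiv m\,u \pmod{\mathfrak{m}^{n+2}}$, forcing $u \in \mathfrak{m}^{n+2}$ and hence $X' \equiv X'' \pmod{\mathfrak{m}^{n+2}}$. Induction and completeness then give $X' = X''$. The only real content of the argument is verifying the linearization identity, which in turn relies on the fact that elements of $D$ have constant term $1$ so that conjugation by them is trivial modulo any higher power of $\mathfrak{m}$. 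Divisibility by $m$ in $\Q$ is what makes each stage of the Newton iteration solvable, which is why this particular word has the strong property asserted.
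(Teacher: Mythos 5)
Your argument is correct. The paper proves the same statement by writing out $A_i$, $B$, $X$ coefficientwise and equating the coefficient of each word $w$: on the left this coefficient is $m x_w + Q_w(\mathbf{x},\alpha)$, where $Q_w$ involves only $x_u$ with $|u|<|w|$, so one solves the triangular system by $x_w = \frac1m(\beta_w - Q_w)$, inducting on word length. Your route packages the identical mechanism as a Hensel/Newton-type successive approximation in the $\mathfrak m$-adic filtration: the multiplicative perturbation $X_n(1+u)$ with $u\in\mathfrak m^{n+1}$, together with the fact that every factor $A_iX_n$ has constant term $1$, yields the linearization $F(X_n(1+u))\equiv F(X_n)+mu \pmod{\mathfrak m^{n+2}}$, and invertibility of $m$ in $\Q$ drives both existence and uniqueness — exactly the role played by $\frac1m$ in the paper's recursion. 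What each buys: the paper's version is completely explicit (it hands you a recursion computing every coefficient $x_w$, which is in the spirit of their later algorithmic remark for $UT_n$), while yours is more conceptual and transfers verbatim to any complete Hausdorff filtered $\Q$-algebra with $D = 1+\mathfrak m$, making clear that the only input is the linear term $mu$ and completeness. Two small points worth tightening: in the uniqueness step you need $X''$ invertible to write $X'=X''(1+u)$ (true since elements of $D$ are units, but say so), and the final conclusion $X'=X''$ uses Hausdorffness ($\bigcap_n \mathfrak m^n = 0$) rather than completeness; also, when passing to the limit for existence, a one-line remark that $F$ is continuous (or that the linearization gives $F(X)\equiv F(X_n)\pmod{\mathfrak m^{n+1}}$) closes the argument cleanly.
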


\begin{proof}
Write 
	\[ A_i = 1+ \sum_{w \in \{a,b\}^*} \alpha^i_w w(a,b), \]
	\[ B = 1+ \sum_{w \in \{a,b\}^*} \beta_w w(a,b), \]
	\[ X = 1+ \sum_{w \in \{a,b\}^*} x_w w(a,b),\]
where the sums are over all nonempty finite words in the alphabet $\{a,b\}$.  Denote the length of a word $w$ by $|w|$.  Expanding the product
	\begin{equation} \label{eq:generalword} \prod_{i = 1}^m {(A_i X)} = B \end{equation}
and equating coefficients, the coefficient of $w(a,b)$ on the left hand side is equal to $m x_w$ plus a polynomial $Q_w(\mathbf{x}, \mathbf{\alpha})$ in the variables $x_u$ for $|u|<|w|$ and the variables $\alpha^i_u$ for $|u|\leq |w|$.  Hence
we obtain for each nonempty word $w$ that 
	\[ m x_w + Q_w( \mathbf{x}, \mathbf{\alpha} ) = \beta_w. \]
Assuming, inductively, that $x_u$ has been determined for all $|u| < |w|$, this polynomial system has the unique solution
	\[ x_w = \frac{1}{m} (\beta_w - Q_w( \mathbf{x}, \mathbf{\alpha} )).\]
It follows that (\ref{eq:generalword}) has a unique solution~$X$.
\end{proof}



A natural problem in this setting is to characterize those power series in $D$ which arise as radical expressions of $1 + a$ and $1+b$.

\begin{problem}
Characterize those power series $f \in D$ which are elements of $\mathcal{R}(\phi_M(F_2))$.  
\end{problem}

Let $\psi: \mathcal{F} \to D$ be the homomorphism of uniquely divisible groups with $\psi([A]) = 1+a$ and $\psi([B]) = 1+b$ given by Lemma \ref{lem:mainFreeUDG}.  Surprisingly, while the Magnus homomorphism $\phi_M: F_2 \to D$ is an embedding of groups, it is a very old open question whether $\psi$ is also an embedding.  As far as we know, Baumslag has the best result on this problem \cite{Baum68}, giving injectivity when $\psi$ is restricted to certain one-relator subgroups of $\mathcal{F}$.  

Our next example is a matrix group.
Let $K$ be a field of characteristic $0$ and let~$UT_{n}$ be the group of $n \times n$ \textit{unipotent matrices} over~$K$.  These are the upper triangular matrices with coefficients in~$K$ with~$1$'s along the diagonal.  

\begin{proposition}\label{unipotent}
For any $A_1,\ldots,A_m,B \in UT_{n}$, the equation $\prod\nolimits_{i = 1}^m {(A_i X)} = B$ 
has a unique solution $X \in UT_{n}$.  (In particular, $UT_{n}$ is a uniquely divisible group.)
\end{proposition}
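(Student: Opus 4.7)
The plan is to mimic the proof of Proposition \ref{prop:uniquepowerseries}, using the distance above the diagonal as the analogue of word length. Write $A_i = I + M_i$, $X = I + Y$, and $B = I + C$, where $M_i$, $Y$, $C$ are strictly upper triangular; then solving the equation $\prod_{i=1}^m (A_i X) = B$ amounts to solving for the entries $Y_{jk}$ with $k > j$.

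The key observation is that strictly upper triangular matrices behave nilpotently with respect to the ``level'' $d = k - j$: if $N^{(1)}, \ldots, N^{(r)}$ are strictly upper triangular, then $(N^{(1)} \cdots N^{(r)})_{jk} = 0$ whenever $k - j < r$, and for $k - j = d$ a nonzero contribution can only involve entries $N^{(s)}_{p_{s-1}, p_s}$ with $p_s - p_{s-1} \leq d - (r-1)$, all strictly less than $d$ when $r \geq 2$. Expanding the product
\[
\prod_{i=1}^m (A_i X) \;=\; \prod_{i=1}^m (I + M_i)(I + Y)
\]
as a sum over choices of identity, $M_i$, or $Y$ at each of the $2m$ factor positions, the contribution to entry $(j, j+d)$ from picking a single $Y$ (at one of the $m$ even positions) and identities elsewhere is exactly $m Y_{j, j+d}$; any other term either picks no $Y$ at all (contributing a polynomial in the $M_i$ entries alone) or picks two or more nonidentity factors, in which case by the level observation it is a polynomial in entries of the $M_i$ and in entries $Y_{pq}$ with $q - p < d$.

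Equating the $(j, j+d)$ entries therefore gives
\[
m\, Y_{j, j+d} + Q_{j,d}(Y_{< d}, M) \;=\; C_{j, j+d},
\]
for a polynomial $Q_{j,d}$ depending only on $Y$-entries at strictly smaller levels and on the fixed entries of the $M_i$. Since $K$ has characteristic $0$, $m$ is invertible, and we may solve inductively on $d = 1, 2, \ldots, n-1$ to obtain a unique value for each $Y_{j, j+d}$. This produces the unique $X = I + Y \in UT_n$ solving the equation; taking $A_i = I$ for all $i$ and $B$ arbitrary shows in particular that every element of $UT_n$ has a unique $m$-th root, so $UT_n$ is uniquely divisible.

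The only step requiring genuine care is the level bookkeeping in the expansion: one must verify that, when two or more nonidentity factors are selected, every resulting entry at level $d$ is a polynomial in $Y$-entries of level strictly less than $d$. This follows directly from the fact that a product of $r \geq 2$ strictly upper triangular matrices contributes to the $(j, j+d)$ entry only through chains $j < p_1 < \cdots < p_{r-1} < j+d$, so each matrix in the chain is evaluated at a pair whose gap is at most $d - (r-1) < d$.
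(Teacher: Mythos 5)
Your proof is correct, but it takes a different route from the paper's. You filter by the superdiagonal level $d=k-j$ and solve for the entries $Y_{j,j+d}$ inductively in $d$, which is the direct analogue of the paper's proof of Proposition~\ref{prop:uniquepowerseries} for noncommutative power series: the coefficient extraction $m\,Y_{j,j+d}+Q_{j,d}=C_{j,j+d}$ is exactly the matrix counterpart of $m\,x_w+Q_w(\mathbf{x},\mathbf{\alpha})=\beta_w$, and your level bookkeeping (a product of $r\geq 2$ strictly upper triangular matrices only reaches level $d$ through chains with all gaps at most $d-(r-1)<d$) is sound. The paper instead inducts on the matrix size $n$, writing each matrix in block form with an upper-left $UT_n$ block and a last column; the block equation splits into a smaller instance of the same problem plus a linear system $My=v-u$, where $M$ is invertible because its diagonal entries all equal $m\neq 0$ in characteristic zero. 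What each approach buys: yours makes the analogy with the power series group $D$ transparent (in effect $UT_n$ is handled by truncating that argument) and produces the solution explicitly level by level; the paper's block induction yields the recursive algorithm mentioned in the subsequent Remark and extends with no change to the larger class of upper-triangular groups whose $i$-th diagonal entries lie in radical subgroups $R_i$, a setting where your bookkeeping would need modification, since the coefficient of $Y_{j,k}$ would then be a sum of diagonal products rather than literally $m$. For the statement as given, your argument is complete, including the final observation that taking all $A_i=I$ yields unique $m$-th roots and hence unique divisibility.
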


\begin{proof}
Induct on~$n$.  For $n = 1$, the claim is
obvious as $UT_{1} = \{1\}$.  Next for $n\geq 1$, let $A_{i},B \in UT_{n+1}$
$(i=1,\ldots,m)$ and write each $A_{i},B,X$ in block form
\[ A_{i} = \left[ {\begin{array}{*{20}c}
   U_{i} & u_{i}  \\
   0 & 1  \\
 \end{array} } \right], \ \ B = \left[ {\begin{array}{*{20}c}
   V & v  \\
   0 & 1  \\
 \end{array} } \right], \ \ X = \left[ {\begin{array}{*{20}c}
   Y & y  \\
   0 & 1  \\
 \end{array} } \right],
\] in which $U_{i},V  \in UT_{n}$ and $u_{i},v \in K^n$; and $Y \in UT_n$ and $y\in K^n$ are unknowns.  
When the equation $\prod\nolimits_{i = 1}^m {(A_i X)} = B$ is expanded, it takes the form
\begin{equation}
\left[ {\begin{array}{*{20}c}
   {\prod\nolimits_{i = 1}^m {(U_i Y)} } & u + {\sum\nolimits_{j = 1}^m {\left[ {\left( {\prod\nolimits_{i = 1}^{j - 1} {(U_i Y) } } \right)U_j } \right]} y}  \\
   0 & 1  \\
 \end{array} } \right] = \left[ {\begin{array}{*{20}c}
   V & v  \\
   0 & 1  \\
 \end{array} } \right],
 \end{equation}
 in which $u \in K^n$ is a function of $U_{i},u_{i}$, and $Y$ only (and thus does not depend on $y$).  
 
By the inductive hypothesis, 
the equation $\prod\nolimits_{i = 1}^m {(U_i Y) } = V$ has a unique solution $Y \in UT_{n}$.  Let $M = {\sum\nolimits_{j = 1}^m { {\left( {\prod\nolimits_{i = 1}^{j - 1} {(U_i Y) } } \right)U_j } } }$.  Since~$K$ has characteristic zero, the diagonal entries of~$M$ are all equal to~$m\neq 0$, so $M$ is invertible.  The word equation $\prod\nolimits_{i = 1}^m {(A_i X)} = B$ therefore has the unique solution $X = \left[ {\begin{array}{*{20}c}
   Y & y  \\
   0 & 1  \\
 \end{array} } \right]$,
 where $y = M^{-1}(v-u)$.
\end{proof}

\begin{remark}  
The proof of Proposition~\ref{unipotent} gives a recursive algorithm for finding the unique solution~$X$.
Moreover, the proof carries over to a slightly larger class of groups.  Let $R_1,\ldots,R_n$ be radical subgroups of~$K^*$, such that for each~$i=1,\ldots,n$, the additive semigroup generated by $R_i$ does not contain~$0$.  Let~$G$ 
be the group of $n\times n$ upper-triangular matrices over $K$ whose $i$-th diagonal entry lies in $R_i$ for all~$i$.  This group is uniquely divisible, and every word equation $\prod (A_i X) = B$ with $A_i, B \in G$ has a unique solution $X\in G$.  For example, taking $K=\R$ with $R_1 = \R_+$ and $R_2=\{1\}$, we obtain the group of affine transformations $t \mapsto at + b$, where $a\in \R_+$ and $b \in \R$.  
\end{remark}

\section{The word polynomial}\label{sec:wordpoly}

Given a finite word $w$ over the alphabet $\{X,A\}$, write 
\begin{equation}\label{exponentvector}
w = A^{a_0} X A^{a_1} X \cdots A^{a_{n-1}} X A^{a_n}
\end{equation}
for nonnegative integers $a_0, \ldots, a_{n}$.  The \emph{word polynomial} of~$w$ is the polynomial in commuting variables~$x$ and~$y$ given by 
	\[ P_w(x,y) := \sum_{k=0}^{n-1} x^{k} y^{a_0 + \cdots + a_k}. \]
For example, the word $w = X^{n-1}AX$ has word polynomial 
	\[P_w(x,y) = 1+x+x^2+\cdots+x^{n-2}+x^{n-1} y. \] 
Note that if $w$ ends in $X$ (i.e., $a_n=0$) then~$w$ can be uniquely recovered from~$P_w$.

If $u$ is another word over the same alphabet, the composition $u \circ w$ is the word obtained by replacing each occurrence of the letter~$X$ in~$u$ by the word~$w$.  Although composition of words is not commutative, it can be modeled by multiplication of polynomials according to the following lemma.  

\begin{lemma} 
\label{compositiontomultiplication}
Let $u$ and $w$ be finite words in the alphabet $\{X,A\}$ ending with~$X$, and let $m,n$ be respectively the number of letters in $w$ equal to $A,X$.  Then
\[ P_{u \circ w}(x,y) = P_u(x^n y^{m},y)P_w(x,y). \]
\end{lemma}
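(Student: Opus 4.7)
The plan is to prove the identity by direct enumeration of the $X$'s in $u \circ w$ and tracking, for each, the number of $A$'s that precede it. First I would set up notation. Writing $u = A^{a_0} X A^{a_1} X \cdots A^{a_{r-1}} X$ and $w = A^{b_0} X A^{b_1} X \cdots A^{b_{s-1}} X$ (the trailing $A$-blocks vanish since both words end in $X$), I define the partial sums $A_j := a_0 + \cdots + a_{j-1}$ for $1 \le j \le r$ and $B_i := b_0 + \cdots + b_{i-1}$ for $1 \le i \le s$, so that
\[ P_u(x,y) = \sum_{j=1}^{r} x^{j-1} y^{A_j}, \qquad P_w(x,y) = \sum_{i=1}^{s} x^{i-1} y^{B_i}, \]
and note that $s = n$ and $B_s = m$ (the total number of $A$'s in $w$).

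Next I would parametrize the $X$'s of $u \circ w$ by pairs $(j,i)$ with $1 \le j \le r$ and $1 \le i \le s$: the pair $(j,i)$ designates the $i$-th $X$ inside the $j$-th inserted copy of $w$. Such an $X$ sits at $0$-indexed position $(j-1)s + (i-1)$ among all $X$'s of $u \circ w$, and the crucial observation is that the number of $A$'s preceding it decomposes as a sum of three independent contributions: $A_j$ from the $A$-blocks of $u$ that appear before the $j$-th substitution slot, $(j-1)m$ from the $j-1$ complete copies of $w$ already inserted earlier, and $B_i$ from the $A$-blocks within the current copy of $w$ that precede its $i$-th $X$.

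Plugging this into the definition of $P_{u \circ w}$ gives a double sum that separates cleanly into a product:
\[ P_{u \circ w}(x,y) = \sum_{j=1}^{r} \sum_{i=1}^{s} x^{(j-1)s + (i-1)} y^{A_j + (j-1)m + B_i} = \Bigl( \sum_{j=1}^{r} (x^s y^m)^{j-1} y^{A_j} \Bigr) \Bigl( \sum_{i=1}^{s} x^{i-1} y^{B_i} \Bigr), \]
and the two factors on the right are precisely $P_u(x^n y^m, y)$ and $P_w(x, y)$, completing the identity.

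There is no deep obstacle here; the only care required is bookkeeping of index offsets (the exponent of $x$ starts at $0$, and the hypothesis that $w$ ends in $X$ is what ensures that the last $A$-block $A^{b_s}$ is empty and does not contribute a spurious monomial). The conceptual heart of the argument is the additive decomposition of the $A$-count preceding each $X$ of $u \circ w$, which is exactly what turns composition of words into multiplication of word polynomials under the substitution $x \mapsto x^n y^m$.
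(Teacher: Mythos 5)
Your proof is correct and follows essentially the same route as the paper's: both parametrize the $X$'s of $u\circ w$ by (copy of $w$, position within that copy), show the number of preceding $A$'s is the sum of the three contributions $A_j + (j-1)m + B_i$, and factor the resulting double sum, which is exactly the paper's identity $c_0+\cdots+c_{qn+r}=b_0+\cdots+b_q+qm+a_0+\cdots+a_r$. The bookkeeping of the $0$-indexed position $(j-1)s+(i-1)$ and the use of $w$ ending in $X$ match the paper's argument, so there is nothing to correct.
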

\begin{proof}
If $w$ is given by (\ref{exponentvector}), write
	\begin{align*} u &= A^{b_0} X A^{b_1} X \cdots A^{b_{m-1}} X; \\
			       u \circ w &= A^{c_0} X A^{c_1} X \cdots A^{c_{mn-1}} X. \end{align*}
Then for $0 \leq q \leq m-1$ and $0 \leq r \leq n-1$ we have
	\[ c_{qn+r} = \begin{cases} a_r + b_q , & r=0 \\ a_r, & r>0. \end{cases} \]
Since $m=a_0 + \cdots + a_{n-1}$, we have
	\[ c_0 + \cdots + c_{qn+r} = b_0 + \cdots + b_q + qm + a_0 + \cdots + a_r, \]
hence
	\begin{align*} P_{u\circ w}(x,y) &=
	\sum_{q=0}^{m-1} \sum_{r=0}^{n-1} x^{qn+r} y^{b_0 + \cdots + b_{q} + qm + a_0 + \cdots + a_{r}} \\   
		&= \sum_{q=0}^{m-1} x^{qn} y^{qm + b_0 + \cdots + b_{q}} \sum_{r=0}^{n-1} x^r y^{a_0 + \cdots + a_{r}} \\
		&= P_u(x^n y^{m},y) P_w(x,y). \qed
		\end{align*}
\renewcommand{\qedsymbol}{}
\end{proof}

Call a polynomial $g \in \mathbb Z[x,y]$ \textit{geometric} if for some integers $k,d \geq 1$ and $l \geq 0$, we have
	\[ g(x,y) = 1 + x^k y^l + (x^ky^l)^2 + \cdots + (x^ky^l)^d.\]
We thank an anonymous referee for the following observation and its proof. We will not make use of it in this paper, but we include it here in the hopes that it will aid in future attempts at proving Conjecture~\ref{universalconj}.

\begin{lemma}
\label{refereesobservation}
The word $w$ is totally decomposable if and only if $P_w(x,y)$ is a product of geometric polynomials and $y^j$ for some $j \geq 0$.
\end{lemma}
\begin{proof}
Noting that the maps in Definition~\ref{def:decomposable} satisfy $\pi_{m,k} \circ l = l \circ \pi_{m,k+1}$ and $\pi_{m,k} \circ r = r \circ \pi_{m,k+1}$, we can move all $l$ and $r$ maps in the composition to the beginning.  
So any totally decomposable word $w$ can be written as $w= A^j (u_1 \circ \cdots \circ u_r) A^{j'}$ for some $j,j' \geq 0$ and words $u_i = \pi_{m_i,k_i}(X)$. Noting that each word polynomial $P_{u_i}$ is geometric, it follows from Lemma \ref{compositiontomultiplication} that $P_w$ is a product of geometric polynomials and $y^j$.

The key observation for the converse is the following. If $w$ and $W$ are words ending in $X$ such that
	\begin{equation} \label{wordpolyfactor} P_W (x,y) = g(x,y) P_w (x,y) \end{equation}
for a geometric polynomial $g(x,y) = 1 + x^k y^\ell + \cdots + (x^k y^\ell)^d$, then $k=n$ and $\ell \geq m$, where $m,n$ are respectively the number of letters in $w$ equal to $A,X$. 
To see this, set $y=1$ in \eqref{wordpolyfactor} to obtain
	\[ 1+x+\cdots + x^{N-1} = (1+x^k + \cdots + x^{kd} ) (1+x+\cdots + x^{n-1}). \]
Equating coefficients of $x^k$ shows that $k \geq n$.  Equating coefficients of $x^n$ then shows that $k=n$. 
Now the coefficients of $x^{n-1}$ and $x^n$ on the right side of \eqref{wordpolyfactor} are respectively $y^m$ and $y^\ell$, which shows that $\ell \geq m$.

Setting $u = (XA^{\ell-m})^d X$ we see that $g(x,y) = P_u(x^n y^m, y)$, so $P_W = P_{u \circ w}$ by Lemma~\ref{compositiontomultiplication}. Since a word is determined by its word polynomial up to multiplication on the right by a power of $A$, we conclude that $W = u \circ w$. In particular, if $w$ is totally decomposable, then so is $W$.  It follows by induction on $r$ that if $P_w = y^j g_1 \cdots g_r$ for some $j \geq 0$ and geometric polynomials $g_1,\ldots,g_r$, then $w$ is totally decomposable.
\end{proof}

\old{
It will sometimes be useful to have the word polynomial~$P_w$ defined when~$w$ does not end in~$X$; to do this, we set $P_{wA} = P_w$ for all words~$w$, where~$wA$ is the word obtained from~$w$ by appending the letter~$A$.  
}
Our next lemma shows another context in which the word polynomial~$P_w$ arises: from substituting certain affine transformation matrices for the letters of~$w$.

\begin{lemma}
\label{matrixsubstitution}
Let $x,y,z$ be commuting indeterminates, let $w(X,A)$ be a finite word, and let $m,n$ be respectively the number of letters in $w$ equal to $A,X$.   Then
\[ w \left(\left[\begin{array}{@{}cc@{}}
x&z\\
0&1\\
\end{array}\right],\,
\left[\begin{array}{@{}cc@{}}
y&0\\
0&1\\
\end{array}\right] \right) = \left[\begin{array}{@{}cc@{}}
x^ny^m & P_w(x,y)z\\
0&1\\
\end{array}\right].\]
\end{lemma}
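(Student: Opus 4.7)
The plan is a direct computation by induction on $n$, the number of $X$'s appearing in $w$. The key observation is that all matrices involved lie in the affine group $H$ of $2\times 2$ matrices of the form $\begin{pmatrix} \alpha & \beta \\ 0 & 1 \end{pmatrix}$, which I will abbreviate as the pair $(\alpha,\beta)$. The multiplication rule in $H$ is $(\alpha_1,\beta_1)\cdot(\alpha_2,\beta_2) = (\alpha_1\alpha_2,\ \alpha_1\beta_2+\beta_1)$, so the upper-left entry behaves multiplicatively while the upper-right entry accumulates, weighted by the running upper-left entry. In our notation, $M_X=(x,z)$ and $M_A^k=(y^k,0)$ for $k\geq 0$.

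First I would dispose of the base case $n=0$, where $w=A^{a_0}$ has no $X$'s, $P_w$ is the empty sum $0$, and $w(M_X,M_A)=M_A^{a_0}=(y^{a_0},0)=(x^0 y^m,\, 0\cdot z)$, matching the claim since $m=a_0$. For the inductive step, write $w = w' X A^{a_n}$, where $w' = A^{a_0}X\cdots X A^{a_{n-1}}$ contains $n-1$ copies of $X$ and $m - a_n$ copies of $A$. By the inductive hypothesis,
\[ w'(M_X,M_A) = \bigl(x^{n-1}y^{a_0+\cdots+a_{n-1}},\ P_{w'}(x,y)\,z\bigr), \]
where $P_{w'}(x,y)=\sum_{k=0}^{n-2}x^k y^{a_0+\cdots+a_k}$.

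Next I would right-multiply this by $M_X$ and then by $M_A^{a_n}$, applying the rule for $H$ twice. Multiplication by $M_X=(x,z)$ advances the upper-left entry by a factor of $x$ and adds $x^{n-1}y^{a_0+\cdots+a_{n-1}}z$ to the upper-right entry, producing precisely the missing $k=n-1$ term of $P_w$. Subsequent multiplication by $M_A^{a_n}=(y^{a_n},0)$ only scales the upper-left entry by $y^{a_n}$, converting $y^{a_0+\cdots+a_{n-1}}$ into $y^{a_0+\cdots+a_n}=y^m$, while leaving the upper-right entry unchanged. Combining these two steps gives
\[ w(M_X,M_A) = \bigl(x^n y^m,\ (x^{n-1}y^{a_0+\cdots+a_{n-1}} + P_{w'}(x,y))\,z\bigr) = \bigl(x^n y^m,\ P_w(x,y)\,z\bigr), \]
which is the desired identity.

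There is no genuine obstacle here; the proof is essentially bookkeeping, and the only thing to watch carefully is the indexing convention in the definition of $P_w$ (in particular, that the sum runs to $k=n-1$ and ignores $a_n$). Alternatively, one could avoid induction entirely and just expand the product $M_A^{a_0} M_X M_A^{a_1} M_X \cdots M_X M_A^{a_n}$ using the $H$-multiplication rule iteratively, reading off the upper-right entry as the telescoping sum $\sum_{k=0}^{n-1}x^k y^{a_0+\cdots+a_k}z$; the inductive presentation is cleaner but equivalent.
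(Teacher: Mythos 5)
Your proof is correct and takes essentially the same approach as the paper: a direct induction exploiting the multiplication rule in the group of matrices $\left[\begin{smallmatrix}\alpha&\beta\\0&1\end{smallmatrix}\right]$. The only cosmetic difference is that you induct on the number of $X$'s by peeling the block $XA^{a_n}$ off the right, whereas the paper inducts on the total length using the left-prepending recurrence $P_{Xw}=xP_w+1$, $P_{Aw}=yP_w$; the computation is the same.
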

\begin{proof}
Note that $P_{A} = 0$, $P_{X}=1$, and $P$ obeys the recurrence
	\begin{align*}
	P_{Xw} (x,y) &= x P_{w} (x,y) + 1, \\
	P_{Aw} (x,y) &= y P_{w} (x,y).
	\end{align*}
Here~$Xw$ (resp.\ $Aw$) denotes the word obtained by prepending the letter~$X$ (resp.\ $A$) to~$w$.  The result follows by induction on $m+n$.
\end{proof}

\begin{remark}
One can also define a noncommutative word polynomial $\tilde P_w$ by taking $x,y,z$ above to be noncommuting indeterminates.  It is given by
	\[ \tilde P_w(x,y) = y^{a_0} + y^{a_0} x y^{a_1} + \cdots + y^{a_0} x y^{a_1} x \cdots x y^{a_{n-1}}. \]
Another interpretation of $\tilde P_w$ (pointed out to us by Martin Kassabov) is the Fox derivative $\partial w/ \partial X$ (see \cite{fox}) where we interpret $w(X,A)$ as an element of the free group on generators $\{X,A\}$. From this perspective, Lemma~\ref{compositiontomultiplication} is a form of the chain rule.  

We will only use the commutative word polynomial $P_w$ in this paper.  We remark that this polynomial arises in a completely different context in the theory of symmetric functions, as an eigenvalue of the first-order Macdonald operator $D_n^1$; see \cite[page 317, eq.\ (3.10)]{Macdonald}.
\end{remark}

\old{
As we will see, word polynomials appear naturally in exponent representations when ``commuting" certain product expressions in $pq$-groups; see Lemma \ref{thepolynomialappears!} below.  For this purpose, it is convenient to define the word polynomial~$P_w$ for all words~$w$, not just those ending in~$X$.  For an arbitrary word~$w$, write~$w = u A^{a_{n+1}}$, where $u$ is either the empty word or a word ending with~$X$.  If~$u$ is empty, then we set~$P_w=0$, and otherwise we set~$P_w = P_u$.
}

\section{Proof of Thoerem \ref{thereduction}}\label{sec:reductionthm}

We begin with the following elementary fact.

\begin{lemma}\label{relprimeuniqueroot}
Let $G$ be a group of order $n$.  If $m$ and $n$ are relatively prime, then every element of $G$ has a unique $m$-th root.
\end{lemma}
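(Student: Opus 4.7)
The plan is to use B\'ezout's identity together with Lagrange's theorem, which gives $g^{n}=1$ for every $g\in G$ (since the order of each element divides $|G|=n$). Since $\gcd(m,n)=1$, I can fix integers $s,t\in\Z$ with $sm+tn=1$. The candidate for the $m$-th root of a given $g\in G$ will be $g^{s}$.

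For existence, I would simply compute
\[ (g^{s})^{m} \;=\; g^{sm} \;=\; g^{1-tn} \;=\; g\cdot(g^{n})^{-t} \;=\; g, \]
using $g^{n}=1$. For uniqueness, suppose $x\in G$ also satisfies $x^{m}=g$. Note that $x^{n}=1$ as well, so
\[ x \;=\; x^{sm+tn} \;=\; (x^{m})^{s}\,(x^{n})^{t} \;=\; g^{s}\cdot 1 \;=\; g^{s}. \]
Thus $g^{s}$ is the unique $m$-th root of $g$.

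There is essentially no obstacle: the argument is a two-line application of B\'ezout plus Lagrange, and crucially it never requires $G$ to be abelian, because powers of a single element always commute. The only mild subtlety is remembering that Lagrange's theorem is what allows one to replace $x^{n}$ and $g^{n}$ by $1$ in the exponent arithmetic; otherwise the argument is entirely formal manipulation of integer exponents of a single group element.
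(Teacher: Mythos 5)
Your proof is correct and follows exactly the same route as the paper's (which simply states that with $rm+sn=1$ the element $g^r$ is the unique $m$-th root); you have merely filled in the routine verification via Lagrange's theorem. Nothing further is needed.
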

\begin{proof}
There exist $r, s \in \Z$ such that $rm + sn = 1$.  One now checks that $g^r$ is the unique $m$-th root of $g$. 
\end{proof}

Let $G_i$ ($i = 1,2,\ldots$) be an infinite sequence of finite groups with the following property: For every positive integer $m$, there exists an $N$ such that
\begin{equation}
\label{nosmallprimedivisors}
\mbox{$m$ and $\#G_i$ are relatively prime for all $i > N$}.
\end{equation}
  By Lemma \ref{relprimeuniqueroot}, these groups have a limiting kind of unique divisibility, which suggests taking the quotient of the direct product of the $G_i$ by their direct sum.

\begin{lemma}\label{directproduct}
If $G_1, G_2,\ldots$ is a sequence of finite groups satisfying (\ref{nosmallprimedivisors}), then the group
\[
G = \prod_{i=1}^{\infty} { G_i  } \Big/ \bigoplus_{i=1}^{\infty} { G_i  }
\]
is uniquely divisible.
\end{lemma}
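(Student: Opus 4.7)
The plan is to verify the two defining properties (existence and uniqueness of $m$-th roots) for $G$, exploiting the fact that multiplication in $\prod G_i$ is componentwise together with Lemma~\ref{relprimeuniqueroot} applied in sufficiently late coordinates.

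First I would remark that $\bigoplus_i G_i$ is normal in $\prod_i G_i$ (conjugation by any element preserves the property of having all but finitely many coordinates trivial), so $G$ is a well-defined group. For existence, fix an element $[g] \in G$ with representative $g = (g_i) \in \prod_i G_i$ and a positive integer $m$. Applying condition~(\ref{nosmallprimedivisors}) to $m$, choose $N$ such that $\gcd(m, \#G_i)=1$ for all $i>N$. For each $i > N$, Lemma~\ref{relprimeuniqueroot} produces a unique $h_i \in G_i$ with $h_i^m = g_i$; for $i \leq N$, set $h_i = 1_{G_i}$. Then $h = (h_i)$ lies in $\prod_i G_i$, and since multiplication and inversion are coordinatewise, $h^m \cdot g^{-1}$ has trivial entries in all coordinates $i > N$. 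Hence $h^m g^{-1} \in \bigoplus_i G_i$, giving $[h]^m = [g]$.

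For uniqueness, suppose $[h]^m = [h']^m = [g]$, where $h = (h_i)$ and $h' = (h'_i)$ are representatives. Then $h^m (h')^{-m} \in \bigoplus_i G_i$, and because the product in $\prod_i G_i$ is coordinatewise this means $h_i^m = (h'_i)^m$ for all but finitely many $i$, say for $i > N'$. Enlarging $N'$ if necessary, we may also assume $\gcd(m, \#G_i) = 1$ for all $i > N'$. By Lemma~\ref{relprimeuniqueroot}, each $G_i$ with $i > N'$ admits a unique $m$-th root, forcing $h_i = h'_i$ for all such $i$. Therefore $h (h')^{-1}$ has trivial entries outside a finite set of coordinates, so $h(h')^{-1} \in \bigoplus_i G_i$ and $[h] = [h']$ in $G$.

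There is no real obstacle here: the only subtle point is that, for a noncommutative $\prod_i G_i$, one must remember that $h^m(h')^{-m}$ is not in general close to $(h(h')^{-1})^m$, so the reduction to the fiberwise claim $h_i^m = (h'_i)^m$ needs the componentwise structure of the direct product rather than any group identity. Once that is noted, both parts reduce immediately to Lemma~\ref{relprimeuniqueroot} applied coordinatewise, using~(\ref{nosmallprimedivisors}) to handle all but finitely many coordinates and the quotient by $\bigoplus_i G_i$ to absorb the remaining ones.
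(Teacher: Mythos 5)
Your proposal is correct and follows essentially the same route as the paper: construct the root coordinatewise via Lemma~\ref{relprimeuniqueroot} in all sufficiently late coordinates (setting early coordinates to the identity), and get uniqueness by noting that two $m$-th roots must agree coordinatewise for all but finitely many $i$, again by Lemma~\ref{relprimeuniqueroot} together with~(\ref{nosmallprimedivisors}). The only differences are cosmetic (you spell out normality of $\bigoplus_i G_i$ and compare two arbitrary roots rather than an arbitrary root against the constructed one), so nothing further is needed.
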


\begin{proof}
Given $g = (g_1,g_2, \ldots) \in \prod G_i$, write $[g]$ for its image under the quotient map $\prod G_i \to G$.  For any $g \in \prod G_i$ and any $m \geq 1$, by Lemma~\ref{relprimeuniqueroot} and (\ref{nosmallprimedivisors}), there exists~$N$ such that for all $i\geq N$ the element $g_i$ has a unique $m$-th root $h_i \in G_i$.  Set $h_i=1$ for all $i<N$.  Then $[h]^m = [g]$.  Moreover, if $h' \in \prod G$ satisfies $[h']^m=[g]$, then $(h'_i)^m = g_i$ for all but finitely many~$i$.  By (\ref{nosmallprimedivisors}) it follows that $h'_i=h_i$ for all but finitely many $i$, so $[h']=[h]$.  Thus every element $[g]\in G$ has a unique $m$-th root in $G$.
\end{proof}

This lemma allows us to construct many examples of uniquely divisible groups.  Next we describe the sequence of groups $G_i$ that we will use to prove Theorem~\ref{thereduction}.

Let $p$ be an odd prime, and let  $q=\frac{p-1}{2}$.  Since the group $(\Z/p\Z)^*$ of units mod~$p$ is cyclic of order $p-1$, we can pick an element $t\in (\Z/p\Z)^*$ whose multiplicative order mod~$p$ is $q$ (namely, $t$ can be the square of any generator).  The powers of $t$ are exactly the nonzero squares, i.e. quadratic residues, in $\Z/p\Z$.  We take $G_p$ to be the semidirect product $(\Z/q\Z) \ltimes (\Z/p\Z)$, which has the presentation
    \[ G_p = \left\langle S,T \; : \; S^t T=TS,\, T^q=1,\, S^p=1 \right\rangle. \]

The group $G_p$ can be realized concretely as the group of affine transformations of $\Z/p\Z$ of the form 
	\[ z \mapsto a z + b \] 
where $a \in (\Z/p\Z)^*$ is a quadratic residue and $b \in \Z/p\Z$ is arbitrary.  Thus we can view $G_p$ as the group of all $2\times 2$ matrices of the form
\[ \left[\begin{array}{@{}cc@{}}
t^k&b\\
0&1\\
\end{array}\right] \]
where $k\in \Z/q\Z$ and $b \in \Z/p\Z$.  The generators $S$ and $T$ correspond to the affine transformations $z \mapsto z+1$ and $z \mapsto tz$, or the matrices
 \[	S=\left[\begin{array}{@{}cc@{}}
1&1\\
0&1\\
\end{array}\right], \qquad 
T=\left[\begin{array}{@{}cc@{}}
t&0\\
0&1\\
\end{array}\right] . \]

\old{
From the identity $t^k (z + \ell) = t^k z + t^k \ell$ we have the relation
	\[  T^k S^\ell = S^{t^k \ell} T^k. \]
}

\old{
Any element $M \in G_p$ can be uniquely expressed in ``normal form'' $M = S^\gamma T^\beta $, where $0 \leq \gamma \leq p-1$ and $0 \leq \beta \leq q-1$, i.e. basically any affine transformation in the group $G_p$ is expressible as  $z\mapsto t^\beta\cdot z + \gamma$.  

}

\begin{lemma}
\label{thepolynomialappears!}
Let $\alpha,\beta \in \{0,\ldots,q-1\}$ and $\gamma \in \{0,\ldots,p-1\}$.  For any word $w = w(X,A)$, the following identity holds in the group~$G_p$:
    \[ w\big(S^\gamma T^\beta, T^\alpha \big) = S^{\gamma P_{w}(t^{\beta},t^\alpha)} T^{\alpha m + \beta n} \]
where $m$ and $n$ are respectively the number of $A$'s and $X$'s in~$w$.
\end{lemma}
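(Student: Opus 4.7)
The plan is to derive the identity as a direct specialization of Lemma \ref{matrixsubstitution}, using the concrete faithful matrix representation of $G_p$ described immediately above the statement. Since $G_p$ embeds into $GL_2(\mathbb{F}_p)$ via its action by affine transformations on $\mathbb{F}_p$, it will suffice to verify the identity at the level of $2\times 2$ matrices; the matrix we compute will be of the form $\left(\begin{smallmatrix} * & * \\ 0 & 1 \end{smallmatrix}\right)$, and such matrices uniquely determine an expression of the shape $S^{\gamma'} T^{\beta'}$.

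First I will record the images of the substituted letters by a one-line matrix multiplication:
\[ S^\gamma T^\beta = \begin{pmatrix} 1 & \gamma \\ 0 & 1 \end{pmatrix}\begin{pmatrix} t^\beta & 0 \\ 0 & 1 \end{pmatrix} = \begin{pmatrix} t^\beta & \gamma \\ 0 & 1 \end{pmatrix}, \qquad T^\alpha = \begin{pmatrix} t^\alpha & 0 \\ 0 & 1 \end{pmatrix}. \]
These are exactly the matrix shapes occurring in Lemma \ref{matrixsubstitution} under the specialization $x = t^\beta$, $z = \gamma$, $y = t^\alpha$ in the commutative ring $\mathbb{F}_p$. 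Because the conclusion of Lemma \ref{matrixsubstitution} was proved as a polynomial identity over $\mathbb{Z}[x,y,z]$, it transfers without change to any commutative ring, $\mathbb{F}_p$ in particular.

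Applying the lemma and reading off matrix entries then gives
\[ w(S^\gamma T^\beta, T^\alpha) = \begin{pmatrix} t^{\alpha m + \beta n} & \gamma P_w(t^\beta, t^\alpha) \\ 0 & 1 \end{pmatrix} = S^{\gamma P_w(t^\beta, t^\alpha)} T^{\alpha m + \beta n}, \]
which is the claimed identity (with the $S$-exponent interpreted modulo $p$ and the $T$-exponent modulo $q$). The only bookkeeping required is to confirm that the assignment $S \mapsto \left(\begin{smallmatrix} 1 & 1 \\ 0 & 1 \end{smallmatrix}\right)$, $T \mapsto \left(\begin{smallmatrix} t & 0 \\ 0 & 1 \end{smallmatrix}\right)$ respects the three defining relations $S^p=1$, $T^q=1$, $S^t T = T S$ of $G_p$; all three follow by direct computation using $\operatorname{ord}(t) = q$ in $\mathbb{F}_p^*$. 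I do not anticipate any real obstacle, since the whole argument reduces immediately to the polynomial identity already established in Lemma \ref{matrixsubstitution}.
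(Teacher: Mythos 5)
Your proposal is correct and matches the paper's proof: the paper likewise realizes $G_p$ concretely as the group of matrices $\left[\begin{smallmatrix} t^k & b \\ 0 & 1 \end{smallmatrix}\right]$ and obtains the identity by setting $x=t^\beta$, $y=t^\alpha$, $z=\gamma$ in Lemma~\ref{matrixsubstitution}. The extra bookkeeping you mention (checking the defining relations) is already built into the paper's description of $G_p$ as an affine transformation group, so nothing further is needed.
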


\begin{proof}
Since
\[ S^\gamma T^\beta = \left[\begin{array}{@{}cc@{}}
t^\beta & \gamma \\
0&1\\
\end{array}\right], \qquad T^\alpha =\left[\begin{array}{@{}cc@{}}
t^\alpha & 0 \\
0&1\\
\end{array}\right] \]
the result follows from setting $x=t^\beta$, $y=t^\alpha$ and $z=\gamma$ in Lemma~\ref{matrixsubstitution}.
\end{proof}

As the reader will see, the restriction that $t$ is a quadratic residue is responsible for the appearence of $P_w(x^2,y^2)$ instead of $P_w(x,y)$ in the next lemma, and hence in Theorem~\ref{thereduction}.  This restriction is essential in our construction, because the group $(\Z / p \Z)^* \ltimes (\Z/p\Z)$ has even order $p(p-1)$, so these larger groups can not be used to form a sequence satisfying (\ref{nosmallprimedivisors}).

\begin{lemma}
\label{thecrux}
Let $w$ be a finite word in the alphabet $\{X,A\}$, and let $n$ be the number of letters in~$w$ equal to~$X$.  Let~$p$ be a prime such that~$q = \frac{p-1}{2}$ is relatively prime to~$n$.
If the equation \[ P_w(x^2,y^2)=0 \] has a solution $(x,y) \in (\Z/p\Z)^* \times (\Z/p\Z)^*$, then there exist $a,b \in G_p$ for which the word equation \[ w(X,a)=b \] has no solution $X \in G_p$.
\end{lemma}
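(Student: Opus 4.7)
The plan is to exploit Lemma~\ref{thepolynomialappears!}, which already encodes $P_w$ as the ``shear'' coefficient of the affine transformation $w(S^\gamma T^\beta, T^\alpha)$. Since every element of $G_p$ has a unique normal form $S^\gamma T^{\beta'}$ with $\gamma \in \{0,\ldots,p-1\}$ and $\beta' \in \{0,\ldots,q-1\}$, setting $a=T^\alpha$ for some $\alpha$ to be chosen, any candidate solution $X = S^\gamma T^{\beta'}$ satisfies
\[ w(X,a) = S^{\gamma P_w(t^{\beta'},t^\alpha)} T^{\alpha m + \beta' n}. \]
This makes the problem of finding $b$ with no preimage purely a matter of producing $\alpha,\beta' \in \Z/q\Z$ such that $P_w(t^{\beta'},t^\alpha) \equiv 0 \pmod p$.

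The hypothesis is designed exactly to produce such $\alpha,\beta'$: the subgroup of quadratic residues in $(\Z/p\Z)^*$ has order $q$ and is generated by $t$. Given any $(x,y)\in (\Z/p\Z)^* \times (\Z/p\Z)^*$, the squares $x^2$ and $y^2$ are residues, so I would write $x^2 = t^\beta$ and $y^2 = t^\alpha$ for uniquely determined $\alpha,\beta \in \{0,\ldots,q-1\}$. Then the assumption $P_w(x^2,y^2) \equiv 0 \pmod p$ becomes $P_w(t^\beta,t^\alpha) \equiv 0 \pmod p$. I then set $a = T^\alpha$ and $b = S\, T^{\alpha m + \beta n}$.

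To conclude that $w(X,a) = b$ has no solution, take any $X = S^\gamma T^{\beta'}$ in $G_p$ and apply Lemma~\ref{thepolynomialappears!}. Matching normal forms, the $T$-exponents must agree mod $q$, giving $\alpha m + \beta' n \equiv \alpha m + \beta n \pmod q$, i.e.\ $\beta' n \equiv \beta n \pmod q$. Because $\gcd(n,q)=1$ by hypothesis, this forces $\beta' \equiv \beta \pmod q$, so $\beta' = \beta$. But then the $S$-exponent of $w(X,a)$ equals $\gamma\, P_w(t^\beta,t^\alpha) \equiv 0 \pmod p$, whereas the $S$-exponent of $b$ is $1 \not\equiv 0 \pmod p$, a contradiction.

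There is no real obstacle here; the work is already done by Lemma~\ref{thepolynomialappears!}. The only conceptual points to verify are (i) that $x^2,y^2$ genuinely lie in the cyclic subgroup $\langle t \rangle$ of quadratic residues, which is immediate, and (ii) that the coprimality hypothesis $\gcd(n,q)=1$ is what lets us recover $\beta$ uniquely from the $T$-coordinate, thereby pinning the $S$-coordinate of $w(X,a)$ to $0$ and allowing any nonzero choice of $\delta \in \Z/p\Z$ in the definition of $b$ to produce an obstruction.
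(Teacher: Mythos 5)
Your proposal is correct and follows essentially the same route as the paper's proof: writing $x^2,y^2$ as powers of $t$, choosing $a=T^\alpha$ and $b=S\,T^{\alpha m+\beta n}$, and using Lemma~\ref{thepolynomialappears!} together with $\gcd(n,q)=1$ to force the $T$-exponent and derive the contradiction $1\equiv 0 \pmod p$ from the $S$-exponent. The only difference is notational (the paper names the exponent of $x^2$ by $\delta$ and reserves $\beta$ for the unknown), so nothing further is needed.
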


\begin{proof}
Suppose $(x,y) \in (\Z/p\Z)^* \times (\Z/p\Z)^*$ solves $P_w(x^2,y^2)=0$.
Since any quadratic residue mod~$p$ is a power of $t$, we can find integers $\alpha,\delta$ such that $x^2 = t^\delta$ and $y^2=t^\alpha$.  Let 
	\[ a = T^\alpha, \quad
	   b = S T^{\alpha m + \delta n} \]
where $m$ is the number of letters in~$w$ equal to $A$.
By Lemma~\ref{thepolynomialappears!}, an element $X = S^\gamma T^\beta  \in G_p$ solves the word equation $w(X,a)=b$ if and only if
	\begin{equation} \label{equatemypowers} S^{\gamma P_{w}(t^{\beta},t^\alpha)} T^{\alpha m + \beta n} = b = S T^{\alpha m + \delta n}. \end{equation}
Equating powers of $T$, we obtain $\beta n \equiv \delta n \modulonospace{q}$.
Since $n$ and $q$ are relatively prime, it follows that $\beta \equiv \delta \modulonospace{q}$, and hence $t^{\beta} \equiv t^\delta \modulonospace{p}$. Now equating powers of~$S$ in (\ref{equatemypowers}) yields
	\[ 1 \equiv \gamma  P_w(t^{\beta},t^\alpha) \equiv \gamma  P_w(t^\delta,t^\alpha) \equiv \gamma  P_w(x^2,y^2) \equiv 0 \modulo{p}, \]
so there is no solution~X to $w(X,a)=b$ in $G_p$.
\end{proof}

We may now prove our main result.

\begin{proof}[Proof of Theorem~\ref{thereduction}]
Let $\pi_0 = 2, \pi_1 = 3, \ldots$ be the primes in increasing order.  By the Chinese remainder theorem, for each~$i \geq 1$ there is an integer~$k_i$ satisfying
	\begin{align*} k_i &\equiv 3 \modulo{4} \\
	 k_i &\equiv 2 \modulo{\pi_j}, \qquad j=1,\ldots,i. \end{align*}
By Dirichlet's theorem on primes in arithmetic progression, for each~$i$ there exists a prime~$p_i$ satisfying
	\[ p_i \equiv k_i \modulo{4\pi_1 \ldots \pi_i}. \]
By construction, $\frac{p_i-1}{2}$ is not divisible by any of $2,\pi_1,\ldots,\pi_i$.  Since 
	\[ \# G_{p_i} = \frac{p_i (p_i -1)}{2}, \]
the sequence of groups $G_{p_1}, G_{p_2},\ldots$ satisfies condition (\ref{nosmallprimedivisors}), so by Lemma~\ref{directproduct} the quotient group
	\[ G = \prod_{i \geq 1} G_{p_i} \Big/ \bigoplus_{i \geq 1} G_{p_i} \]
is uniquely divisible.  Moreover, $G$ is metabelian since each factor $G_{p_i}$ is metabelian.

Now let~$w$ be a word in the alphabet~$\{X,A\}$, and let~$n$ be the number of letters in~$w$ equal to~$X$.  Let $\pi_{i_0}$ be the largest prime divisor of~$n$.
For $i > i_0$ we have $\frac{p_i-1}{2}$ relatively prime to~$n$.  By hypothesis, we can choose $i_1 \geq i_0$ sufficiently large so that the equation $P(x^2,y^2) = 0$ has a solution $(x_i,y_i) \in (\Z/p_i\Z)^* \times (\Z/p_i\Z)^*$ for all $i > i_1$.  By Lemma~\ref{thecrux}, for each $i > i_1$, there exist $a_i,b_i \in G_{p_i}$ for which the word equation
	\[ w(X,a_i)=b_i \]
has no solution $X \in G_{p_i}$.
%
Let $A,B \in G$ be the images of the sequences $(a_i)_{i > i_1}$ and $(b_i)_{i>i_1}$ under the quotient map
	\[ \prod_{i > i_1} G_{p_i} \longrightarrow G. \]
The equation $w(X,A)=B$ has no solution $X \in G$.

Finally, we prove that there exists $B' \in G$ such that $w(X,A) = B'$ has at least two solutions $X \in G$.  Using the result just proved, for all $i > i_1$, the map $G_{p_i} \to G_{p_i}$ sending $g \mapsto w(g,a_i)$ is not surjective, hence not injective.  Let $g_i, \tilde{g}_i$ be distinct elements of $G_{p_i}$ such that $w(g,a_i)=w(\tilde{g}_i,a_i)$, and let $B'$ be the image in $G$ of the sequence $(w(g_i,a_i))_{i>i_1}$.  Then the images in $G$ of the sequences $(g_i)_{i>i_1}$ and $(\tilde{g}_i)_{i>i_1}$ are distinct solutions to $w(X,A)=B'$.
\end{proof}

\section{Word equations not solvable by radicals}\label{sec:notsolvable}

In this section, we use Corollary \ref{mainUnivCor} to find several infinite families of words that are not universal, and consequently not solvable in terms of radicals.

\begin{lemma}\label{XnAXthm}
Let $m$ and $n$ be distinct positive integers, and let $w = X^mAX^n$.  Then $P_{w}(x^2,y^2)$ is irreducible in $\C[x,y]$.
\end{lemma}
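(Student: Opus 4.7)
The plan is to compute $P_w(x,y)$ explicitly and reduce the irreducibility question to a standard criterion on polynomials that are quadratic in one of the variables. A direct expansion from the definition gives
\[ P_w(x,y) = (1 + x + \cdots + x^{m-1}) + y\, x^m\, (1 + x + \cdots + x^{n-1}) = f(x) + y\, g(x), \]
where $f(x) = (x^m-1)/(x-1)$ and $g(x) = x^m (x^n-1)/(x-1)$. Substituting yields
\[ P_w(x^2, y^2) = f(x^2) + g(x^2)\, y^2, \]
a polynomial of degree $2$ in $y$ with no linear term.

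A polynomial of the form $A(x) + B(x)\, y^2$ with $A, B \in \C[x]$ is irreducible in $\C[x,y]$ if and only if (i) $\gcd(A, B) = 1$ in $\C[x]$, so the $y$-content is trivial, and (ii) the discriminant $-4AB$ is not a square in $\C(x)$, i.e.\ the polynomial is irreducible as a quadratic in $y$ over the fraction field. I would verify both conditions with $A = f(x^2)$ and $B = g(x^2)$, and then invoke Gauss's lemma to pass from $\C(x)[y]$ back down to $\C[x,y]$.

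For (i), the substitution $u = x^2$ reduces the question to computing $\gcd(f(u), g(u))$ in $\C[u]$, which I would handle by the Euclidean algorithm and the factorization of $u^k-1$ into cyclotomic polynomials $\Phi_d$. For (ii), using
\[ -f(x^2)\, g(x^2) \;=\; -\,\frac{x^{2m}(x^{2m}-1)(x^{2n}-1)}{(x^2-1)^2}, \]
I would tally, for each $d \geq 2$, the exponent of $\Phi_d(x)$ appearing on the right, and argue that the hypothesis $m \neq n$ forces at least one cyclotomic factor to occur with odd multiplicity, so $-f(x^2)g(x^2)$ cannot be a square in $\C(x)$.

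The main obstacle is step (ii): identifying a specific $d$ whose $\Phi_d(x)$-multiplicity in $-f(x^2)g(x^2)$ is odd. The interaction between the substitution $u\mapsto x^2$ (under which each $\Phi_d(u)$ splits in a way depending on the parity of $d$) and the asymmetric divisor sets of $2m$ and $2n$ requires a careful parity bookkeeping, and must be robust enough to produce an odd-multiplicity factor for \emph{every} pair $m \neq n$, including pairs where $\gcd(m,n) > 1$.
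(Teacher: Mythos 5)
Your framework (quadratic in $y$, trivial $y$-content plus non-square discriminant, then Gauss's lemma) is essentially the paper's setting, but the proposal stops short of the one step that carries all the content. What you flag as ``the main obstacle'' --- producing, for every pair $m\neq n$, a cyclotomic factor of $-f(x^2)g(x^2)$ of odd multiplicity --- is exactly the assertion to be proved, and no argument is given; a plan that defers the decisive step is a genuine gap. The paper closes it in two lines by working with the \emph{ratio} rather than the product (they differ by the square $g(x^2)^2$, so one is a square in $\C(x)$ iff the other is):
\[ \frac{f(x^2)}{g(x^2)} \;=\; x^{-2m}\,\frac{x^{2m}-1}{x^{2n}-1}, \]
and if $m<n$ a primitive $2n$-th root of unity is a simple pole of this function (the numerator is nonzero there and $x^{2n}-1$ has only simple roots), while if $m>n$ a primitive $2m$-th root of unity is a simple zero. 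A rational function with a point of odd order is not a square in $\C(x)$, so the quadratic is irreducible in $\C(x)[y]$. No parity bookkeeping over all divisors of $2m$ and $2n$ is needed; one well-chosen root of unity suffices, since the $(x^2-1)$ factors cancel in the ratio and $x^{-2m}$ has even order everywhere.

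Be aware also that your step (i) is not a routine verification: it is false whenever $\gcd(m,n)>1$. Any $2\gcd(m,n)$-th root of unity other than $\pm 1$ is a common root of $f(x^2)=(x^{2m}-1)/(x^2-1)$ and $g(x^2)=x^{2m}(x^{2n}-1)/(x^2-1)$, so the $y$-content is nontrivial; for instance with $m=2$, $n=4$ one gets $P_w(x^2,y^2)=(x^2+1)\bigl(1+y^2x^4(x^4+1)\bigr)$, which is visibly reducible in $\C[x,y]$. So pursuing your condition (i) by the Euclidean algorithm would simply get stuck for non-coprime $m,n$. Note that the paper's own proof never claims $\gcd\bigl(f(x^2),g(x^2)\bigr)=1$: it only establishes irreducibility in $\C(x)[y]$, i.e.\ irreducibility of the primitive part, and that weaker statement is all the application needs, since Corollary~\ref{mainUnivCor} asks only for \emph{some} factor in $\Z[x,y]$ that is irreducible over $\C$. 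If you keep your two-condition formulation, you should restrict the content claim to the coprime case (or pass to the primitive part), and in any case you must actually supply the non-squareness argument, for which the simple pole/zero observation above is the efficient route.
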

\begin{proof}
We view the word polynomial
	\[P_w(x^2,y^2) = \frac{x^{2m}-1}{x^2-1} + y^2 x^{2m} \frac{x^{2n}-1}{x^2-1} \]
as a polynomial in~$y$ with coefficients in $\C(x)$.  

If $m<n$, then there exists $\zeta \in \C$ such that $\zeta^{2m} \neq 1$ and $\zeta^{2n} = 1$, in which case $\zeta$ is a simple pole of $f(x) = x^{-2m} (x^{2m}-1) / (x^{2n}-1)$; likewise, if $m>n$, then $f$ has a simple root.  Thus $f$ is not a square in $\C(x)$, which implies that $P_w(x^2,y^2)$ is irreducible in $\C(x)[y]$.
\end{proof}

By Corollary~\ref{mainUnivCor}, it follows that for positive integers $m\neq n$, the word equation 
	\[ X^mAX^n = B \]
has no solution in terms of radicals.  Our next result shows that for $m \geq 0$ and $n\geq 1$, the word equation 
	\[ XA^{m+2n}XA^{m+n}XA^mX = B \]
has no solution in terms of radicals.

\begin{lemma}\label{XAr2sXArsXArXthm}
Let $m \geq 0$ and $n \geq 1$ be integers, and let $w=XA^{m+2n}XA^{m+n}XA^mX$.  Then $P_{w}(x^2,y^2)$ has a factor in $\Z[x,y]$ which is irreducible over $\C[x,y]$.
\end{lemma}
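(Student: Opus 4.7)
The plan is to exhibit an explicit factor of $P_w(x^2,y^2)$ and show it is absolutely irreducible. First I would compute the word polynomial directly from the definition: with $a_0=0$, $a_1=m+2n$, $a_2=m+n$, $a_3=m$, $a_4=0$, one obtains
\[ P_w(x,y) = 1 + xy^{m+2n} + x^2 y^{2m+3n} + x^3 y^{3m+3n}. \]
The key observation is that $x=-y^{-(m+n)}$ is a root, so $1+xy^{m+n}$ divides $P_w$; polynomial long division (verifiable by expansion) gives the $\Z[x,y]$-factorization
\[ P_w(x,y) = (1+xy^{m+n})\bigl(1 - xy^{m+n} + xy^{m+2n} + x^2 y^{2m+2n}\bigr). \]
Substituting $x \mapsto x^2$, $y \mapsto y^2$ yields $P_w(x^2,y^2) = (1 + x^2 y^{2m+2n})\, R(x,y)$, where
\[ R(x,y) := 1 + x^2 y^{2m+2n}(y^{2n}-1) + x^4 y^{4m+4n} \in \Z[x,y]. \]
Since $1 + x^2 y^{2m+2n}$ factors over $\C$, the whole burden of proof is to show that $R$ is irreducible in $\C[x,y]$.

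To analyze $R$, I would first treat it as a quadratic $S(u,y) := 1 + u y^{2m+2n}(y^{2n}-1) + u^2 y^{4m+4n}$ in the variable $u=x^2$ over $\C[y]$. Its discriminant equals
\[ y^{4m+4n}\bigl[(y^{2n}-1)^2 - 4\bigr] = y^{4m+4n}(y^{2n}-3)(y^{2n}+1), \]
whose non-square part $(y^{2n}-3)(y^{2n}+1)$ has $4n$ distinct roots in $\C$ (namely the $2n$-th roots of $3$ and of $-1$). Hence the discriminant is not a square in $\C(y)$, so $S(u,y)$ is irreducible in $\C[u,y]$.

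This rules out only factorizations of $R$ which respect the substitution $u = x^2$. To upgrade to irreducibility in $\C[x,y]$, I would exploit the symmetry $x \mapsto -x$, which fixes $R$. Any putative factorization $R = fg$ in $\C[x,y]$ must, up to scalars, either have both $f,g$ invariant under $x \mapsto -x$ (hence $f,g \in \C[x^2,y]$, reducing to a factorization of $S(u,y)$, which we have excluded) or have $f(-x,y)$ equal to a scalar multiple of $g(x,y)$. In the latter case, $\deg_x f = \deg_x g = 2$, so writing $f(x,y) = A(y)x^2 + B(y) x + C(y)$ and $g = \lambda\, f(-x,y)$, matching coefficients of $R = \lambda\, f(x,y)\,f(-x,y) = \lambda[(Ax^2+C)^2 - B^2 x^2]$ forces $C$ constant, $A$ a constant times $y^{2m+2n}$, and
\[ B^2 \in \C^{\times} \cdot y^{2m+2n}(3 - y^{2n}) \quad \text{or} \quad \C^{\times} \cdot y^{2m+2n}(-1 - y^{2n}). \]
Neither $3-y^{2n}$ nor $-1-y^{2n}$ is a square in $\C[y]$ (both are squarefree of degree $2n$), so no such $B \in \C[y]$ exists. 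Hence no nontrivial factorization of $R$ exists in $\C[x,y]$.

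The main obstacle is the final symmetry argument: while irreducibility of $S(u,y)$ follows from a one-line discriminant computation, one must check that no ``mixed'' factorization of $R$ (involving odd powers of $x$ that cancel only in the product) can occur. The computation is routine once the structure of $f$ and $f(-x,y)$ is imposed, but it is what actually distinguishes absolute irreducibility of $R(x,y)$ from the weaker statement that $R$ is irreducible as a polynomial in $x^2$. Together with Corollary~\ref{mainUnivCor}, the absolute irreducibility of $R$ completes the proof that $w(X,A) = B$ is not solvable in terms of radicals for any $m \geq 0$, $n \geq 1$.
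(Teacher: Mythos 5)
Your proposal is correct and follows the paper's overall strategy: the same factorization $P_w(x,y)=(1+xy^{m+n})\bigl(1+x(y^{m+2n}-y^{m+n})+x^2y^{2m+2n}\bigr)$, the same quartic factor after substituting squares, and the same discriminant $y^{4m+4n}(y^{2n}-3)(y^{2n}+1)$. The difference is in how absolute irreducibility of $R$ is certified: the paper splits $R$ into four linear factors over $\overline{\C(y)}$ and asserts that any recombination of a proper subset into an element of $\C[x,y]$ forces the discriminant to be a square, whereas your two-step argument (irreducibility of $S(u,y)$ via the discriminant, then the $x\mapsto -x$ symmetry with coefficient matching) isolates and explicitly kills the one remaining possibility, a ``mixed'' factorization $R=\lambda f(x,y)f(-x,y)$ with $\deg_x f=2$, by showing $3-y^{2n}$ and $-1-y^{2n}$ are not squares in $\C[y]$; that case is controlled by these two polynomials rather than by the discriminant, so your extra computation is precisely the content compressed into the paper's ``it is easy to check,'' and it is the step that genuinely upgrades irreducibility in $u=x^2$ to irreducibility in $\C[x,y]$. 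One small tightening is needed: your dichotomy for an arbitrary factorization $R=fg$ (both factors invariant under $x\mapsto -x$, or $g$ proportional to $f(-x,y)$) is not justified as stated, since for instance a split with $\deg_x f=1$ and $\deg_x g=3$ fits neither alternative; instead run the argument with $f$ an irreducible factor of $R$: if $f(-x,y)$ is an associate of $f$, then both $f$ and its cofactor lie in $\C[x^2,y]$ and $S$ factors; otherwise $f(-x,y)$ divides the cofactor, and either the remaining factor is constant (your mixed case, necessarily with $\deg_x f=2$ since $x\nmid R$) or $f\cdot f(-x,y)$ together with the remaining invariant factor again gives a nontrivial factorization of $S$ --- in particular an $x$-linear factor would hand $S(u,y)$ a root in $\C(y)$, which your discriminant computation already forbids. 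With that one-line patch the proof is complete, and combined with Corollary~\ref{mainUnivCor} it yields the stated non-solvability by radicals.
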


\begin{proof}
The corresponding word polynomial factors as:
\[ P_{w}(x,y) = (1 + xy^{m+n})[1 + x(y^{m+2n} - y^{m+n}) + x^2 y^{2m+2n}].\]
The second factor, upon substituting $x^2$ for $x$ and $y^2$ for $y$, is of the form $h(x,y) = 1 + x^2 f(y) + x^4 g(y).$
We claim that~$h$ is irreducible over $\C[x,y]$.   Let $K = \overline{\C(y)}$ be the algebraic closure of the field of rational functions in $y$.
The polynomial $h$ factors over $K[x]$ as a product of the four linear factors
\[ x \pm \sqrt{\frac{-f(y) \pm \sqrt{D(y)}}{2}} \]
where $D(y) = f(y)^2 - 4g(y)$.  If $h$ had another factorization over $\C[x,y] = \C[y][x]$, then it would have to further factor into this one (up to units) by unique factorization in $K[x]$. In particular, this would imply that a proper subset of the above factors combine to give an element of $\C[x,y]$.  It is easy to check that this can only happen if $D(y)$ is a perfect square in $\C[y]$.
But in fact, \[D(y)  = y^{4m+4n} (y^{2n}-3)(y^{2n} + 1)\] is not a square (all of its nonzero roots are simple).
\end{proof}

The next example shows that the word equation
	\[ XAX^nAX = B \]
has no solution in terms of radicals if $n\geq 3$.

\begin{lemma}\label{XAXbAXthm}
Let $n \geq 3$ be an integer, and let $w = XAX^nAX$.  Then $P_{w}(x^2,y^2)$ is irreducible in $\C[x,y]$.
\end{lemma}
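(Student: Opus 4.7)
I would view $P_w(x^2,y^2) = A(x) y^4 + B(x) y^2 + 1$ as a quadratic in $y^2$ over $\C[x]$, with $A(x) = x^{2n+2}$ and $B(x) = x^2 + x^4 + \cdots + x^{2n}$. Over $K = \overline{\C(x)}$ it factors as $A \prod_{j=1}^{4} (y-\gamma_j)$ with $\gamma_j \in \{\pm\alpha,\pm\beta\}$, where $\alpha^2, \beta^2$ are the roots of $Av^2 + Bv + 1 = 0$. Because $A = (x^{n+1})^2$ is already a perfect square, $\alpha\beta = \pm 1/x^{n+1} \in \C(x)$, so (unlike the situation of Lemma~\ref{XAr2sXArsXArXthm}) I must consider factorizations beyond those descending from a factorization of the quadratic in $v = y^2$.

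Enumerating the partitions of $\{\pm\alpha, \pm\beta\}$ into sub-products lying in $\C(x)[y]$, the possible nontrivial quadratic factors of $P_w(x^2,y^2)$ are $y^2 - \alpha^2$, requiring $D := B^2 - 4A$ to be a square in $\C(x)$, and $(y\pm\alpha)(y\pm\beta)$, requiring $(\alpha\pm\beta)^2 \in \C(x)$ to be a square. Setting $S(x) := (x^{2n}-1)/(x^2-1) = 1 + x^2 + \cdots + x^{2n-2}$, $Q_1(x) := S - 2x^{n-1}$, and $Q_2(x) := S + 2x^{n-1}$, a direct calculation yields
\[ D = x^4 Q_1 Q_2, \qquad (\alpha+\beta)^2 = -Q_1/x^{2n}, \qquad (\alpha-\beta)^2 = -Q_2/x^{2n}. \]
So $P_w(x^2,y^2)$ is reducible in $\C[x,y]$ if and only if at least one of $Q_1 Q_2$, $Q_1$, $Q_2$ is a perfect square in $\C[x]$. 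Since $Q_2 - Q_1 = 4 x^{n-1}$ while $x \nmid Q_i$ (as $Q_i(0) = 1$), $Q_1$ and $Q_2$ are coprime, so $Q_1 Q_2$ is a square precisely when both $Q_1$ and $Q_2$ are; the task thus reduces to showing $Q_1$ and $Q_2$ are individually not squares in $\C[x]$.

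When $n$ is even, $n-1$ is odd, so $-2x^{n-1}$ is exactly the odd part of $Q_1$ as a polynomial in $x$. Assuming $U^2 = Q_1$ and writing $U = U_e + U_o$ for its even--odd decomposition, separating parities in $U^2$ gives $U_e^2 + U_o^2 = S$ and $2U_e U_o = -2 x^{n-1}$. The latter monomial equation forces $U_e$ and $U_o$ each to be a monomial in $\C[x]$, so $U_e^2 + U_o^2$ would consist of at most two monomials---contradicting that $S$ has $n \geq 3$ nonzero terms. The identical argument handles $Q_2$.

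The main obstacle is $n$ odd, where $Q_1 \in \C[x^2]$ and the parity trick collapses. I would substitute $z = x^2$ and pass to $\tilde Q_1(z) := \sum_{k=0}^{2m} z^k - 2 z^m$ with $m = (n-1)/2$; the assumption $Q_1 = U^2$ forces $U$ to be even in $x$ (else $U(0) = 0$), giving $\tilde Q_1 = V^2$ with $V \in \C[z]$ of degree $m$. Let $\phi_k := \binom{2k}{k}/4^k$ be the Taylor coefficients of $(1-z)^{-1/2}$, so that $\sum_{i+j=k}\phi_i\phi_j = 1$ for every $k \geq 0$. Matching coefficients of $V^2$ with $\tilde Q_1$ recursively: the equations $[z^k]V^2 = 1$ for $k = 0,1,\ldots,m-1$ force $v_k = \phi_k$ in that range; the match at $k = m$ (where $[z^m]\tilde Q_1 = -1$) then gives $v_m = \phi_m - 1$, while $[z^{2m}]V^2 = v_m^2$ must equal $1$. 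Combining these forces $\phi_m \in \{0,2\}$, impossible since $0 < \binom{2m}{m}/4^m < 1$ for all $m \geq 1$. The analogous calculation for $\tilde Q_2$ (with $[z^m]\tilde Q_2 = 3$) yields $\phi_m \in \{0,-2\}$, equally impossible.
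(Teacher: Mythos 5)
Your proof is correct, and it takes a genuinely different (and in one respect more thorough) route than the paper's. The paper simply asserts, ``as in the proof of the previous lemma,'' that reducibility of $P_w(x^2,y^2)$ forces $D(x)=(1+x^2+\cdots+x^{2n-2})^2-4x^{2n-2}$ to be a square in $\C[x]$, and then kills $D$ by a rationality-plus-evaluation trick: a square in $\C[x]$ whose $\Z$-irreducible factors have simple roots must already be a square in $\Z[x]$, and $D(1)=n^2-4$ is not a square in $\Z$ for $n\geq 3$. You instead enumerate all monic quadratic factors over $\overline{\C(x)}$ and observe that, because the leading coefficient $x^{2n+2}$ is a perfect square, the mixed factors $(y\pm\alpha)(y\pm\beta)$ have rational constant term, so besides $D=x^4Q_1Q_2$ one must also rule out $Q_1=S-2x^{n-1}$ and $Q_2=S+2x^{n-1}$ being squares individually --- a case the paper's write-up does not address separately, and which the non-squareness of the product alone does not exclude (since $Q_1,Q_2$ are coprime). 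This forces you to prove the stronger statement, and the paper's evaluation trick would not suffice for it ($Q_1(1)=n-2$ or $Q_2(1)=n+2$ can individually be perfect squares), so your replacement arguments --- the even/odd decomposition for $n$ even, and the coefficient comparison with the Taylor coefficients $\phi_k=\binom{2k}{k}/4^k$ of $(1-z)^{-1/2}$ for $n$ odd, landing in the contradiction $\phi_m\in\{0,\pm 2\}$ --- are exactly what is needed, and they check out. Two trivial points to make explicit: normalize $v_0=+1$ by replacing $V$ with $-V$ if necessary before matching coefficients, and note that $U(x)^2=U(-x)^2$ forces $U$ to be even or odd, with the odd case excluded by $Q_i(0)=1$; also, strictly speaking one should say a word (content/Gauss) on why reducibility in $\C[x,y]$ passes to $\C(x)[y]$, though the paper glosses this identically.
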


\begin{proof}
We have
\[ P_{w}(x^2,y^2) = 1 + (x^2 + x^4 + \cdots + x^{2n})y^2+ x^{2n+2}y^4.\]
As in the proof of Lemma~\ref{XAr2sXArsXArXthm}, this polynomial factors over
$\C[x,y]$ only if \[D(x) = (1 + x^2 + \cdots + x^{2n-2})^2 - 4 x^{2n-2}\] 
is a square in $\C[x]$.  Any factor $f$ of $D$ that is irreducible over $\Z[x]$ has distinct roots in $\C$.  Moreover, the zero sets in~$\C$ of any two factors of~$D$ that are irreducible over $\Z[x]$ are either equal or disjoint.  It follows that if~$D$ is a square in~$\C[x]$, then each irreducible factor over~$\Z[x]$ divides~$D$ with even multiplicity, hence~$D$ is a square in $\Z[x]$.  But this is impossible, as~$D(1)=n^2 - 4$ is not a square in~$\Z$ for~$n \geq 3$.
\end{proof}

To further extend these families of words not solvable by radicals, note that under the hypotheses of Theorem~\ref{thereduction}, we can actually derive a slightly stronger conclusion.

\begin{corollary}
Let $u,w$ be finite words in the alphabet $\{X,A\}$.  If $P_w(x^2,y^2)=0$ has a solution $(x_p,y_p) \in (\Z/p\Z)^* \times (\Z/p\Z)^*$ for all but finitely many primes $p$, then the word equations 
	\[ u\circ w(X,A)=B \ \  \text{ and } \ \ w\circ u(X,A)=B \] 
have no solution in terms of radicals.
\end{corollary}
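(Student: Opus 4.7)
The two claims reduce to Theorem~\ref{thereduction} via different routes.

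For $w \circ u$, I would apply Theorem~\ref{thereduction} directly to $w$ (not to the composition), obtaining a uniquely divisible group $G$ and elements $A, B \in G$ for which the equation $w(Y, A) = B$ has no solution $Y \in G$. If some $X \in G$ were a solution to $(w \circ u)(X, A) = B$, then $Y := u(X, A) \in G$ would solve $w(Y, A) = w(u(X,A),A) = (w\circ u)(X,A) = B$, a contradiction. Hence $w \circ u$ is not universal, and the lemma immediately preceding Section~\ref{sec:examples} (which asserts that every radical word is universal) then yields, in contrapositive form, that $w \circ u$ is not solvable in terms of radicals.

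For $u \circ w$, the plan is to apply Theorem~\ref{thereduction} to the composition $u \circ w$ itself. I would verify its hypothesis using the factorization
\[ P_{u \circ w}(x, y) \;=\; P_u(x^n y^m, y) \cdot P_w(x, y), \]
where $n$ and $m$ are the numbers of $X$s and $A$s in $w$ respectively. This is Lemma~\ref{compositiontomultiplication} when both $u$ and $w$ end in $X$; in the general case it follows by repeating the matrix-substitution argument of Lemma~\ref{matrixsubstitution}: plug the upper-triangular matrices used there into the tautology $(u \circ w)(X, A) = u(w(X, A), A)$ and compare the $(1,2)$-entries on the two sides. Substituting $x \mapsto x^2$, $y \mapsto y^2$ then shows that $P_w(x^2, y^2)$ divides $P_{u \circ w}(x^2, y^2)$, so every mod-$p$ solution furnished by the hypothesis on $w$ is automatically a solution of $P_{u \circ w}(x^2, y^2) = 0$ in $(\Z/p\Z)^* \times (\Z/p\Z)^*$. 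Theorem~\ref{thereduction} then produces a uniquely divisible group and elements $A', B'$ for which $(u \circ w)(X, A') = B'$ has no solution, and the same universality-to-radicality step finishes this case as well.

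I do not foresee a serious obstacle in either case: both arguments are short reductions to previously established results. The only minor technicality is extending the factorization of $P_{u \circ w}$ beyond the ending-in-$X$ setting of Lemma~\ref{compositiontomultiplication}, but this is a direct and routine consequence of the matrix-substitution framework of Lemma~\ref{matrixsubstitution}.
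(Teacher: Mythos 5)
Your proposal is correct and follows essentially the same route as the paper: for $u\circ w$ it uses the factorization of Lemma~\ref{compositiontomultiplication} to see that $P_w$ divides $P_{u\circ w}$ and then applies Theorem~\ref{thereduction}, and for $w\circ u$ it applies Theorem~\ref{thereduction} to $w$ directly via the identity $w\circ u(X,A)=w(u(X,A),A)$. Your two added clarifications---extending the factorization beyond the ending-in-$X$ case by the matrix-substitution argument of Lemma~\ref{matrixsubstitution}, and invoking the radical-implies-universal lemma to pass from ``no solution in $G$'' to ``no solution in terms of radicals''---are both valid and simply make explicit steps the paper leaves implicit.
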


\begin{proof}
By Lemma~\ref{compositiontomultiplication}, the word polynomial $P_w$ divides $P_{u\circ w}$, so the equation $P_{u \circ w}(x^2,y^2)=0$ has a nonzero solution modulo all sufficiently large primes.  By Theorem~\ref{thereduction}, there exist $A,B \in G$ for which the equation $u\circ w(X,A)=B$ has no solution $X \in G$. 

The statement for $w\circ u$ is immediate from Theorem~\ref{thereduction}: since there exist $A,B \in G$ for which the equation $w(X,A)=B$ has no solution $X \in G$, the equation $w\circ u(X,A) = w(u(X,A),A) = B$ also has no solution $X \in G$.
\end{proof}

A simple substitution also proves the following.

\begin{corollary}
%
%
Let $n \geq 2$ be an integer.  The word equation 
	\[ X^2(AX)^nX = B \]
is not solvable by radicals.
\end{corollary}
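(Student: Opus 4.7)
The idea is to reduce the equation $X^2(AX)^nX=B$ to a word equation of the form $XAX^{n+1}AX=B'$, which is already known not to be solvable by radicals for $n+1\geq 3$ by Lemma~\ref{XAXbAXthm}. The reduction will be via a conjugation substitution which is itself a radical expression, so solvability by radicals is preserved in both directions.

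First I would introduce the substitution $X = A^{-1/2}YA^{-1/2}$ (equivalently, $Y = A^{1/2}XA^{1/2}$). Because $A^{1/2}$ exists uniquely in any uniquely divisible group (Lemma~\ref{lem:rationalPowLemma}), this is a well-defined bijection between possible values of $X$ and possible values of $Y$, and both directions lie in the radical subgroup generated by $A$ and $X$ (resp.\ $A$ and $Y$). Consequently, $w_n(X,A)=B$ is solvable by radicals if and only if the transformed equation in $Y$ is.

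Next I would compute $w_n(X,A)$ in terms of $Y$. The key observation is that $AX = A^{1/2}YA^{-1/2}$, so by conjugation $(AX)^n = A^{1/2}Y^nA^{-1/2}$; similarly $X^2 = A^{-1/2}YA^{-1}YA^{-1/2}$. A routine telescoping of the intervening $A^{\pm 1/2}$ factors then yields
\[ X^2(AX)^nX \;=\; A^{-1/2}\,Y\,A^{-1}\,Y^{n+1}\,A^{-1}\,Y\,A^{-1/2}. \]
Multiplying both sides of $w_n(X,A)=B$ by $A^{1/2}$ on the left and right, and setting $C := A^{-1}$ and $B' := A^{1/2}BA^{1/2}$, the equation becomes $YCY^{n+1}CY = B'$, which after relabeling is exactly the word equation $XAX^{n+1}AX = B'$.

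Finally I would invoke Lemma~\ref{XAXbAXthm} together with Corollary~\ref{mainUnivCor}: since $n+1\geq 3$, this transformed equation fails to be universal, so there exist $C, B'$ in some uniquely divisible group for which no $Y$ solves $YCY^{n+1}CY=B'$. Taking $A = C^{-1}$ and $B = C^{1/2}B'C^{1/2}$ then exhibits $A, B$ for which the original equation $X^2(AX)^nX = B$ has no solution, and hence $w_n$ is not radical. The only genuine calculation is the telescoping in the middle step, which is mechanical; no serious obstacle arises, since all the hard irreducibility work has already been carried out for the family $XAX^bAX$ in Lemma~\ref{XAXbAXthm}.
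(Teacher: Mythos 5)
Your proposal is correct and follows essentially the same route as the paper: the conjugation $Y = A^{1/2}XA^{1/2}$ (with $C=A^{-1}$, $B'=A^{1/2}BA^{1/2}$) transforming $X^2(AX)^nX=B$ into $YCY^{n+1}CY=B'$, followed by the non-solvability of $XAX^{n+1}AX$ from Lemma~\ref{XAXbAXthm} via Corollary~\ref{mainUnivCor}. The telescoping computation and the direction of the implication you use are exactly those in the paper's proof.
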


\begin{proof}
The solutions $X$ to $X^2(AX)^{n}X = B$ are in direct correspondence with solutions $Y$ to $YCY^{n+1}CY = D$ via the relationships $Y = A^{1/2}XA^{1/2}$, $C = A^{-1}$, $D = A^{1/2}BA^{1/2}$.  In particular, if there are no solutions to the latter equation in a uniquely divisible group $G$ for some $C,D \in G$, then there will be none to the former with $A = C^{-1}$ and $B = A^{-1/2}DA^{-1/2}$.
\end{proof}

As a final remark, one might ask about more general classes of word equations in uniquely divisible groups, such as equations of the form
	\begin{equation}  \label{moregeneral} X^{n_0} A_1 X^{n_1} A_2 \cdots X^{n_{k-1}} A_k X^{n_k} = B \end{equation}
for positive integers $k$ and $n_0, \ldots, n_k$.  However, these equations almost never have solutions in terms of radicals.

\begin{corollary}
The word equation (\ref{moregeneral}) has a solution~$X$ in terms of radicals if and only if $k=1$ and $n_0=n_1$.
\end{corollary}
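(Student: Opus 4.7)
The plan splits by the value of $k$ and by direction. For the $(\Leftarrow)$ direction, suppose $k=1$ and $n_0=n_1=n$. Setting $Y:=X^n$ reduces the equation $X^n A_1 X^n = B$ to the Riccati equation $Y A_1 Y = B$, so by (\ref{riccatieq}) we obtain the explicit radical solution
\[ X \;=\; \left(A_1^{-1/2}(A_1^{1/2} B A_1^{1/2})^{1/2} A_1^{-1/2}\right)^{1/n}. \]
Equivalently, $X^n A X^n$ is totally decomposable---it equals $\pi_{1,1}(X)$ when $n=1$ and $\pi_{1,1}\circ\pi_{n-1,0}(X)$ when $n\geq 2$---so Lemma~\ref{solvablewordprop} applies and gives the solution directly.

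For the $(\Rightarrow)$ direction, I would argue by specialization. The notion of ``solvable in terms of radicals'' extends in the obvious way to words with several coefficient letters, using the free uniquely divisible group on $\{A_1,\ldots,A_k,B\}$. By its universal property (the evident extension of Lemma~\ref{lem:mainFreeUDG}), any radical solution to (\ref{moregeneral}) descends, under any substitution $A_j\mapsto A$ and $A_i\mapsto 1$ for $i\neq j$, to a radical solution of the two-letter equation
\[ X^{m_j} A X^{n_j'} \;=\; B, \]
where $m_j := n_0+\cdots+n_{j-1}$ and $n_j' := n_j+\cdots+n_k$. Both $m_j$ and $n_j'$ are positive since each $n_i\geq 1$.

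In the case $k=1$ and $n_0\neq n_1$, equation (\ref{moregeneral}) is literally $X^{n_0} A X^{n_1} = B$ with unequal exponents, which by Lemma~\ref{XnAXthm} and Corollary~\ref{mainUnivCor} is not even universal, hence not solvable by radicals. In the case $k\geq 2$, I claim that some $j$ satisfies $m_j\neq n_j'$: for otherwise, taking $j=1$ and $j=2$ yields $n_0 = n_1+\cdots+n_k$ and $n_0+n_1 = n_2+\cdots+n_k$, and subtracting gives $n_1 = -n_1$, contradicting $n_1\geq 1$. For such $j$, the specialized equation falls into the previous case and is not solvable by radicals, and hence neither is (\ref{moregeneral}).

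There is no substantive obstacle; the argument is essentially a reduction to Lemma~\ref{XnAXthm} via Corollary~\ref{mainUnivCor}. The only point requiring a moment of care is the specialization step, namely that substituting group elements (or $1$) into a radical formula still yields a radical expression in the remaining letters. This is immediate from the universal property of the free uniquely divisible group on $k+1$ generators.
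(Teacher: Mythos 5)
Your proof is correct and follows essentially the same route as the paper: specialize all but one coefficient letter to $1$ to reduce to an equation $X^mAX^n=B$ with $m\neq n$, which is ruled out by Lemma~\ref{XnAXthm} (via Corollary~\ref{mainUnivCor}), and handle $k=1$, $n_0=n_1$ by total decomposability as in Lemma~\ref{solvablewordprop}. The only differences are cosmetic: the paper first collapses to three letters and then kills $A_1$ or $A_2$ depending on a case check, whereas you kill all but one $A_j$ directly and argue $j=1$ or $j=2$ must give unequal exponents, and you make explicit the (paper-implicit) justification that specialization preserves radical solvability via the universal property of the free uniquely divisible group.
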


\begin{proof}
Suppose first that $k>1$.  By setting $A_3 = \ldots = A_k = 1$, any solution in terms of radicals to (\ref{moregeneral}) 
yields a solution to
	\[ X^{n_0} A_1 X^{n_1} A_2 X^{m_2} = B \]
in terms of radicals, where $m_2 = n_2 + \cdots + n_k$.  If $n_0 + n_1 \neq m_2$, then set $A_1 = 1$; otherwise, set $A_2 =1$.  In either case we obtain a solution in terms of radicals to an equation of the form $X^n A X^m = B$ with $m\neq n$, which is impossible by Lemma~\ref{XnAXthm}.

In the case $k=1$, if $n_0 \neq n_1$ then there is no solution by radicals by Lemma~\ref{XnAXthm}, while if $n_0 = n_1=n$, then the word $w=X^n A X^n$ is totally decomposable, so the equation $X^n A X^n = B$ has a solution by radicals by Lemma~\ref{solvablewordprop}.
\end{proof}

\section*{Acknowledgements}
The authors thank George Bergman, Pavel Etingof, Martin Kassabov, Kiran Kedlaya, Igor Klep, Alexei Miasnikov, Bjorn Poonen, and Kate Stange for helpful conversations.  We thank an anonymous referee for contributing Lemma~\ref{refereesobservation} and for a number of other suggestions which improved the paper.

\bibliographystyle{abbrv}
\bibliography{HLR-UniDivGrps}\label{sec:biblio}

\begin{thebibliography}{10}

\bibitem{Aguiar}
M.~Aguiar, N.~Bergeron, and F.~Sottile.
\newblock Combinatorial {H}opf algebras and generalized {D}ehn-{S}ommerville
  equations.
\newblock {\em Compos.\ Math.}, 142:1--30, 2006.

\bibitem{ArmHill07}
S.~Armstrong and C.~Hillar.
\newblock Solvability of symmetric word equations in positive definite letters.
\newblock {\em J. London Math.\ Soc.}, 76:777--796, 2007.

\bibitem{AubPer96}
Y.~Aubry and M.~Perret.
\newblock A {W}eil theorem for singular curves.
\newblock {\em Arithmetic, geometry and coding theory}, pages 1--7, 1996.

\bibitem{Bach96}
E.~Bach.
\newblock Weil bounds for singular curves.
\newblock {\em Applicable Algebra in Engineering, Communication and Computing},
  7:289--298, 1996.

\bibitem{Baum60}
G.~Baumslag.
\newblock Some aspects of groups with unique roots.
\newblock {\em Acta Math.}, 10:277--303, 1960.

\bibitem{Baum68}
G.~Baumslag.
\newblock On the residual nilpotence of certain one-relator groups.
\newblock {\em Comm.\ Pure Appl.\ Math.}, 21:491--506, 1968.

\bibitem{BMV75}
D.~Bessis, P.~Moussa, and M.~Villani.
\newblock Monotonic converging variational approximations to the functional
  integrals in quantum statistical mechanics.
\newblock {\em J. Math.\ Phys.}, 16:2318--2325, 1975.

\bibitem{Birkhoff35}
G.~Birkhoff.
\newblock On the structure of abstract algebras.
\newblock {\em Proc.\ Cambridge Philos.\ Soc.}, 31:433--454, 1935.

\bibitem{Dykema}
B.~Collins, K.~Dykema, and F.~Torres-Ayala.
\newblock Sum-of-squares results for polynomials related to the
  {B}essis-{M}oussa-{V}illani conjecture, 2009.
\newblock \url{http://arxiv.org/abs/0905.0420}.

\bibitem{otherBMVtry2}
M.~Fannes and D.~Petz.
\newblock Perturbation of {W}igner matrices and a conjecture.
\newblock {\em Proc.\ Amer.\ Math.\ Soc.}, 131:1981--1988, 2003.

\bibitem{fox}
R.~Fox.
\newblock Free differential calculus, {I}: Derivation in the free group ring.
\newblock {\em Ann.\ Math.}, 57(3):547--560, 1953.

\bibitem{Gao03}
S.~Gao.
\newblock Factoring multivariate polynomials via partial differential
  equations.
\newblock {\em Math.\ Comput.}, 72(242):801--822, 2003.

\bibitem{Haegele}
D.~Hagele.
\newblock Proof of the cases $p \leq 7$ of the {L}ieb-{S}eiringer formulation
  of the {B}essis-{M}oussa-{V}illani conjecture.
\newblock {\em J. Stat.\ Phys.}, 127:1167--1171, 2007.

\bibitem{Hansen06}
F.~Hansen.
\newblock Trace functions as {L}aplace transforms.
\newblock {\em J. Math.\ Phys.}, 47:043504, 2006.

\bibitem{HillBMV}
C.~Hillar.
\newblock Advances on the {B}essis-{M}oussa-{V}illani trace conjecture.
\newblock {\em Lin.\ Alg.\ Appl.}, 426:130--142, 2007.

\bibitem{HillJohn03}
C.~Hillar and C.~R. Johnson.
\newblock Positive eigenvalues of generalized words in two {H}ermitian positive
  definite matrices.
\newblock In P.~Pardalos and H.~Wolkowicz, editors, {\em Novel Approaches to
  Hard Discrete Optimization}, volume~37 of {\em Fields Institute
  Communications}, pages 111--122, 2003.

\bibitem{HillJohn04}
C.~Hillar and C.~R. Johnson.
\newblock Symmetric word equations in two positive definite letters.
\newblock {\em Proc.\ Amer.\ Math.\ Soc.}, 132:945--953, 2004.

\bibitem{HillJohn02}
C.~R. Johnson and C.~Hillar.
\newblock Eigenvalues of words in two positive definite letters.
\newblock {\em SIAM J. Matrix Anal.\ Appl.}, 23:916--928, 2002.

\bibitem{KharMyas98}
O.~Kharlampovich and A.~Myasnikov.
\newblock Equations in a free {${\bf Q}$}-group.
\newblock {\em Trans.\ Amer.\ Math.\ Soc.}, 350(3):947--974, 1998.

\bibitem{KharMyas06}
O.~Kharlampovich and A.~Myasnikov.
\newblock Elementary theory of free non-abelian groups.
\newblock {\em J. Algebra}, 302(2):451--552, 2006.

\bibitem{KS2}
I.~Klep and M.~Schweighofer.
\newblock Connes' embedding conjecture and sums of {H}ermitian squares.
\newblock {\em Adv.\ Math.}, 217:1816--1837, 2008.

\bibitem{KS}
I.~Klep and M.~Schweighofer.
\newblock Sums of {H}ermitian squares and the {BMV} conjecture.
\newblock {\em J. Stat.\ Phys.}, 133:739--760, 2008.

\bibitem{LS}
P.~Landweber and E.~Speer.
\newblock On {D}. {H}agele{'}s approach to the {B}essis-{M}oussa-{V}illani
  conjecture.
\newblock {\em Lin.\ Alg.\ Appl.}, 431:1317--1324, 2009.

\bibitem{LawLim06}
J.~Lawson and Y.~Lim.
\newblock Solving symmetric matrix word equations via symmetric space
  machinery.
\newblock {\em Lin.\ Alg.\ Appl.}, 414:560--569, 2006.

\bibitem{KJC80}
K.~J. Le~Couteur.
\newblock Representation of the function {$\text{\rm Tr}(\exp(A-\lambda B))$}
  as a {L}aplace transform with positive weight and some matrix inequalities.
\newblock {\em J. Phys.\ A: Math.\ Gen.}, 13:3147--3159, 1980.

\bibitem{Ledlie71}
J.~Ledlie.
\newblock Representations of free metabelian $\mathcal{D}_{\pi}$-groups.
\newblock {\em Trans.\ Amer.\ Math.\ Soc.}, 153:307--346, 1971.

\bibitem{LiebSeir04}
E.~H. Lieb and R.~Seiringer.
\newblock Equivalent forms of the {B}essis-{M}oussa-{V}illani conjecture.
\newblock {\em J. Stat.\ Phys.}, 115:185--190, 2004.

\bibitem{Macdonald}
I.~G. Macdonald.
\newblock {\em Symmetric functions and {H}all polynomials}.
\newblock Oxford Mathematical Monographs, 2nd ed. edition, 1995.

\bibitem{magnus}
W.~Magnus.
\newblock On a theorem of {M}arshall {H}all.
\newblock {\em Ann.\ of Math.}, pages 764--768, 1939.

\bibitem{Marker10}
D.~Marker.
\newblock {\em Model theory: an introduction}, volume 217 of {\em Graduate
  Texts in Mathematics}.
\newblock Springer-Verlag, New York, 2010.

\bibitem{Moussa00}
P.~Moussa.
\newblock On the representation of $\text{\rm {t}r} \left(e^{A-\lambda B}
  \right)$ as a {L}aplace transform.
\newblock {\em Rev.\ Math.\ Phys.}, 12:621--655, 2000.

\bibitem{Ostrowski19}
A.~Ostrowski.
\newblock Zur arithmetischen theorie der algebraischen grossen.
\newblock {\em Gottinger Nachrichten}, pages 279--296, 1919.

\bibitem{Sela06}
Z.~Sela.
\newblock Diophantine geometry over groups. {VI}. {T}he elementary theory of a
  free group.
\newblock {\em Geom.\ Funct.\ Anal.}, 16(3):707--730, 2006.

\end{thebibliography}

\end{document}